\newcommand{\Add}{\mathrm{Add}}
\newcommand{\rad}{\mathrm{rad}}
\DeclareMathOperator{\Hom}{Hom} \DeclareMathOperator{\End}{End}
\DeclareMathOperator{\Ext}{Ext} \DeclareMathOperator{\Tor}{Tor}
\DeclareMathOperator{\Ker}{Ker} \DeclareMathOperator{\Img}{Im}
\DeclareMathOperator{\Coker}{Coker}
\newcommand{\Dcal}{\ensuremath{\mathcal{D}}}
\newcommand{\Scal}{\ensuremath{\mathcal{S}}}
\newcommand{\Ccal}{\ensuremath{\mathcal{C}}}
\newcommand{\cc}{\Ccal}
\newcommand{\cs}{\Scal}
\newcommand{\cd}{\Dcal}
\newcommand{\Mod}{\mathrm{Mod}}
\theoremstyle{plain}
\newtheorem{thm}{Theorem}
\newtheorem{prop}{Proposition}[section]
\newtheorem{lem}[prop]{Lemma}
\newtheorem{lemma}[prop]{Lemma}
\newtheorem{cor}[prop]{Corollary}
\newtheorem{Que}[prop]{Question}
\theoremstyle{definition}
\newtheorem{ex}[prop]{Example}
\newtheorem*{ex*}{Example}
\theoremstyle{remark}
\newtheorem*{rem}{Remark}
\newcommand{\ra}{\rightarrow}
\newcommand{\ten}{\otimes}
\newcommand{\lten}{\overset{\mathbf{L}}{\ten}}
\newcommand{\rhom}{\mathbf{R}\mathrm{Hom}}
\newcommand{\mcx}{\mathcal{X}}
\begin{document}

\date{\today}



\begin{center}

{\large \bf Recollements and stratifying ideals}

\bigskip

{\sc Lidia Angeleri  H\" ugel\footnote{LAH acknowledges partial
support by    Fondazione Cariparo, Progetto di Eccellenza ASATA.
}, Steffen Koenig, Qunhua Liu\footnote{QL acknowledges support by Natural Science Foundation of China 11301272 and of Jiangsu Province BK20130899.}, Dong Yang\footnote{DY acknowledges support by the DFG priority programme SPP 1388 through grants YA297/1-1 and KO1281/9-1 and by Natural Science Foundation of China 11301272.}}
\bigskip
\end{center}

\bigskip

\address{Lidia Angeleri  H\" ugel
\\ Dipartimento di Informatica - Settore Matematica
\\ Universit\`a degli Studi di Verona
\\ Strada Le Grazie 15 - Ca' Vignal 2
\\ I - 37134 Verona, Italy}
\email{lidia.angeleri@univr.it}

\address{Steffen Koenig\\
Institute of Algebra and Number Theory,
University of Stuttgart \\ Pfaffenwaldring 57 \\ 70569 Stuttgart,
Germany} \email{skoenig@mathematik.uni-stuttgart.de}

\address{Qunhua Liu \\ Institute of Mathematics, School of Mathematical Sciences, Nanjing Normal University \\  Nanjing 210023,
P.R.China}
\email{05402@njnu.edu.cn}

\address{Dong Yang \\ Department of Mathematics, Nanjing University \\ Nanjing 210093, P. R. China}
\email{yangdong@nju.edu.cn}

\begin{quote}
\tiny
{\sc Abstract.} Surjective homological epimorphisms with stratifying kernel can be used to construct recollements of derived module categories. These `stratifying' recollements are derived from recollements of module categories.
Can every recollement be put in this form, up to equivalence? A negative answer will be given after providing a characterisation of recollements  equivalent to stratifying ones.
Moreover, criteria for a ring epimorphism to be `stratifying' will be presented as well as constructions of such epimorphisms.
\\
{\sc MSC 2010 classification:} 16E35, 18E30 \\
{\sc Key words:} Derived module category, recollement, homological epimorphism, stratifying ideal.
\end{quote}

\bigskip

\section{Introduction}

Recollements of triangulated categories have been introduced by
Beilinson, Bernstein and Deligne \cite{BBD} in order to deconstruct
a derived category of constructible sheaves into an open and a closed
part. This concept is meaningful also for derived module
categories of rings. A recollement
\[
\xy (-46,0)*{\mathcal \Dcal(B)}; {\ar (-25,2)*{}; (-35,2)*{}};
{\ar (-35,0)*{}; (-25,0)*{}^{}}; {\ar(-25,-2)*{}; (-35,-2)*{}};
(-15,0)*{{\mathcal D}(A)}; {\ar (5,2)*{}; (-5,2)*{}_{}};
{\ar (-5,0)*{}; (5,0)*{}}; {\ar (5,-2)*{}; (-5,-2)*{}};
(16,0)*{{\mathcal \Dcal(C)}};
\endxy
\]
can be viewed as a short exact sequence of derived module categories
of rings $A$, $B$ and $C$,
with the given derived category $\Dcal(A)$ as middle term.

Directly translating recollements from categories of perverse
sheaves on
flag manifolds to block algebras $A$ of the Bernstein-Gelfand-Gelfand
category $\mathcal O$ of a semisimple complex Lie algebra produces recollements of
derived module categories of quasi-hereditary, or more generally
stratified, algebras, as introduced and studied by Cline, Parshall
and Scott \cite{CPS88,CPS88b,CPS}. By \cite{CPS}, a stratifying ideal $AeA$ of a ring $A$ and the associated  ring epimorphism $A \rightarrow A/AeA$
induce a
recollement of derived module categories of the special form
\[
\xy (-48,0)*{\mathcal \Dcal(A/AeA)}; {\ar (-25,2)*{}; (-35,2)*{}};
{\ar (-35,0)*{}; (-25,0)*{}^{}}; {\ar(-25,-2)*{}; (-35,-2)*{}};
(-15,0)*{{\mathcal D}(A)}; {\ar (5,2)*{}; (-5,2)*{}_{}};
{\ar (-5,0)*{}; (5,0)*{}}; {\ar (5,-2)*{}; (-5,-2)*{}};
(16,0)*{{\mathcal \Dcal(eAe)}.};
\endxy
\]

The ring epimorphism $A \rightarrow A/AeA$ giving rise to this recollement
has the additional property that it is a homological epimorphism and its
kernel is a stratifying ideal. Most examples of recollements in the literature are
known to be of this form, up to applying derived equivalences to the three rings $A$, $B$ and $C$.

One of the main applications of recollements is to relate homological data
of the three rings, such as global or finitistic dimension \cite{Happel93,AKLY2}, K-theory \cite{Schlichting06,CX,AKLY2}
and Hochschild
(co)homology \cite{Keller98,Han11,KoenigNagase09}. From a practical point of view, for a recollement induced by a stratifying
ideal, the resulting long exact sequences are much easier to handle, one reason
being that the six functors in this case are derived from the obvious
six functors associated with an idempotent on the level of module categories.
Moreover, up to Morita equivalence of the algebras involved,
these are exactly the recollements that can be produced by deriving recollements of module
categories (see the classification of such recollements of abelian categories by
Psaroudakis and Vit\'oria \cite{PV}).

Motivated by a kind of folklore conjecture, we consider the question if all recollements
of derived module categories are of this form. Taken naively, this question has
an obvious negative answer, since one may hide the epimorphism $A \rightarrow A/AeA$ by
replacing, for instance, $\Dcal(B)$ by an equivalent derived category $\Dcal(B')$,
where there is no morphism at all from $A$ to $B'$. Moreover, there exist injective
homological epimorphisms, which also induce recollements. A meaningful way to
formulate the question is the following one, suggested by Changchang Xi:
\smallskip

\begin{Que} \label{mainquestion}
Given a recollement relating the derived module categories of
three rings $A$, $B$ and $C$ as
above, is there another - equivalent - recollement, obtained by replacing the
derived module categories by equivalent ones, that is induced by a stratifying
ideal?
\end{Que}


In the following, we will call a recollement `stratifying' when it is induced by
a stratifying ideal, and thus derived from a recollement on module level. Then the question is whether `stratifying' recollements do give a normal form of recollements, similar to the situation for module categories described in \cite{PV}.

In the second Section, we will see that the answer to Question \ref{mainquestion} is negative, even for finite dimensional algebras of finite global dimension, and even when allowing to change all three derived categories.
More precisely, we are going to characterise - in Theorem \ref{maintheorem} and  Corollary \ref{maintheorem-variation} -  the recollements, which up
to derived equivalence are induced by a stratifying ideal. Using these
characterisations, we will give a counterexample to Question \ref{mainquestion}.

In parallel work \cite{PVpositive}, Psaroudakis and Vit\'oria have been able
to provide a positive answer to this question for hereditary rings, using
rather different methods.
\medskip

The negative answer to Question \ref{mainquestion} puts additional emphasis on the following questions, which we are going to address in Sections 3 and 4.

\begin{Que} \label{whenisepisurjective} Given a ring epimorphism $f: A \rightarrow B$, when is it a surjective homological epimorphism with a stratifying kernel?
\end{Que}

The question of when an abstractly given ring epimorphism $f$ is surjective has been studied by several authors in the past, see e.g.~\cite{Reid,Storrer}. In Section 3 we provide some new criteria. The main result is Theorem \ref{surjepi}, which gives a sufficient condition for surjectivity, and then also for the other desired properties, when $A$ is a perfect (e.g.~an artinian) ring.

\begin{Que} \label{howtoconstructstratifyingepi} Given a ring epimorphism that is not stratifying,
when can it be replaced by a surjective homological epimorphism with a stratifying kernel?
\end{Que}

In Section 4, we give two such constructions.
The first one uses a connection with tilting theory from \cite{GL,AHS} to show that certain recollements induced by  injective ring epimorphisms are equivalent to stratifying recollements. The second construction, Theorem \ref{secondconstruction}, allows to form a ring epimorphism $A\to C$ with favourable properties from a given ring epimorphism $A\to B$.

Throughout the paper, rings are associative and unital. Modules by default are right modules and $\Dcal(A)$ denotes the unbounded derived category of the category of all modules over a ring $A$.

\section{Characterisations, and a counterexample}

In this Section we are going to characterise recollements equivalent to stratifying ones, see Theorem \ref{maintheorem} and Corollary \ref{maintheorem-variation}.
As a consequence, we will answer Question \ref{mainquestion}, and variations of it, negatively, by giving an explicit counterexample.
\smallskip

\subsection{Definitions and notations}\label{basics}

We will use standard terminology for derived categories, derived
equivalences and tilting complexes, see for instance \cite{Zimmermann}.

Let $\cc$ be a triangulated category with shift functor $[1]$.
An object $X$ of $\cc$ is \emph{exceptional} if $\Hom_{\cc}(X,
X[n])=0$ unless $n=0$. Let $\cs$ be a set of objects of $\cc$. As usual,
thick$\,\cs$ denotes
the smallest triangulated subcategory of $\cc$
containing $\cs$ and closed under taking direct summands.
Assume further that $\cc$ has all (set-indexed) infinite direct
sums. An object $X$ of $\cc$ is \emph{compact} if the functor
$\Hom_\cc(X,-)$ commutes with taking direct sums.

A \emph{recollement}~\cite{BBD} of
triangulated categories is a diagram
\begin{eqnarray}\xymatrix@!=3pc{\mathcal{C}'
\ar[r]|{i_*=i_!} &\mathcal{C} \ar@<+2.5ex>[l]|{i^!}
\ar@<-2.5ex>[l]|{i^*} \ar[r]|{j^!=j^*} &
\mathcal{C}''\ar@<+2.5ex>[l]|{j_*}
\ar@<-2.5ex>[l]|{j_!}}\label{eq:recollement}\end{eqnarray}of
triangulated categories and triangle functors such that
\begin{enumerate}
\item $(i^\ast,i_\ast)$,\,$(i_!,i^!)$,\,$(j_!,j^!)$,\,$(j^\ast,j_\ast)$
are adjoint pairs;

\item  $i_\ast,\,j_\ast,\,j_!$  are full embeddings;

\item  $i^!\circ j_\ast=0$ (and thus also $j^!\circ i_!=0$ and
$i^\ast\circ j_!=0$);

\item  for each $C\in \mathcal{C}$ there are triangles
\[
\xymatrix@R=0.5pc{
i_! i^!(C)\ar[r]& C\ar[r]& j_\ast j^\ast (C)\ar[r]& i_!i^!(A)[1]\\
j_! j^! (C)\ar[r]& C\ar[r]& i_\ast i^\ast(C)\ar[r]& j_!j^!(A)[1]
}
\]
where the maps are given by adjunctions.
\end{enumerate}
Thanks to (1) and (3), the two triangles (often called the
\emph{canonical triangles}) in (4) are unique up to  unique isomorphisms.


Two recollements involving categories $\Ccal', \Ccal, \Ccal''$ and $\Dcal', \Dcal, \Dcal''$, respectively, are called \emph{equivalent}, if there exists a triangle equivalence $\Ccal \simeq \Dcal$ inducing triangle equivalences $\Ccal' \simeq \Dcal'$ and $\Ccal'' \simeq \Dcal''$ such that all squares commute.
\smallskip

An epimorphism $\varphi:A\ra B$ in the category of rings is called a {\em ring
  epimorphism}. Equivalently, the induced functor
\mbox{$\varphi_*: \text{Mod-}B \ra \text{Mod-}A$}
is a full embedding. Another equivalent characterisation of $\varphi$ being a ring epimorphism is that $\Coker(\varphi) \otimes_A B = 0$,
see e.g. \cite[Chapter XI, Proposition 1.2]{Ste}.

Furthermore $\varphi$ is a {\em
homological epimorphism} if and only if the induced functor
\mbox{$\varphi_*: \Dcal(B) \ra \Dcal(A)$} is a full embedding,
or equivalently,  $\varphi$ is a ring epimorphism with $\Tor^A_i(B,B)=0$ for all $i\ge 1$ (cf. \cite[Theorem
4.4]{GL}). In this case, $\varphi$ induces a recollement
\[
\xy (-53,0)*{\Dcal(B)}; {\ar (-28,3)*{}; (-44,3)*{}_{i^{\ast}}}; {\ar
(-44,0)*{}; (-28,0)*{}|{ i_{\ast}=i_!}}; {\ar(-28,-3)*{};
(-44,-3)*{}^{i^!}}; (-19,0)*{\Dcal(A)}; {\ar (3,3)*{}; (-11,3)*{}};
{\ar (-11,0)*{}; (3,0)*{}}; {\ar(3,-3)*{}; (-11,-3)*{}};
(8,0)*{\mcx};
\endxy
\]
for some triangulated
category $\mcx$, and the functors on the left hand
side are induced by $\varphi$, that is, $i^* = -\lten_A B$, $i^! =
\rhom_A(B,-)$ and $i_* = \varphi_*$.

\smallskip

Homological epimorphisms starting in a ring $A$ are closely related with recollements of $\Dcal(A)$ where the left hand term is a derived category of a ring too. We say that a recollement  of $\Dcal(A)$ by triangulated categories $\Ccal'$ and $\Ccal''$ is {\it induced by a homological epimorphism} $\varphi:A\to B$ if there is an equivalence  $F:\Ccal'\to \Dcal(B)$ such that $i_\ast=\varphi_\ast\circ F$.
The following result characterises such recollements.
\begin{prop}\cite[1.7]{AKL1}\label{akl}
A recollement of $\Dcal(A)$  is  induced by some homological epimorphism $A\to B$ if and only if $i^{\ast}(A)$ is exceptional. In this case, $B$ is the endomorphism ring of $i^{\ast}(A)$.
\end{prop}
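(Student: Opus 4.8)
The plan is to prove both directions by exploiting the standard correspondence between the left recollement data and the functor $i^*$. First suppose the recollement of $\Dcal(A)$ is induced by a homological epimorphism $\varphi\colon A\to B$, so there is an equivalence $F\colon \Ccal'\to\Dcal(B)$ with $i_*=\varphi_*\circ F$. Since $F$ is an equivalence and $i_*$ is a full embedding, it suffices to compute $\Hom_{\Ccal}(i^*(A), i^*(A)[n])$ using adjunction: because $(i^*,i_*)$ is an adjoint pair and $F$ is an equivalence with quasi-inverse, one gets natural isomorphisms
\[
\Hom_{\Ccal'}(i^*(A), i^*(A)[n]) \;\cong\; \Hom_{\Dcal(B)}(Fi^*(A), Fi^*(A)[n]).
\]
Now the composite $Fi^*$ is left adjoint to $i_*\circ F^{-1}=\varphi_*$, hence $Fi^*$ applied to $A$ is the object representing $\Hom_{\Dcal(A)}(A, \varphi_*(-))\cong \Hom_{\Dcal(B)}(B,-)$, i.e. $Fi^*(A)\cong B$ in $\Dcal(B)$. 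Then $\Hom_{\Dcal(B)}(B,B[n])=H^n(B)=0$ for $n\neq 0$, which is exactly exceptionality of $i^*(A)$. This also identifies $\End_{\Ccal}(i^*(A))\cong\End_{\Dcal(B)}(B)\cong B$, giving the last sentence of the statement.

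For the converse, assume $i^*(A)$ is exceptional. Set $B:=\End_{\Ccal'}(i^*(A))$; I would first check that the natural ring homomorphism $A\to B$ is well-defined and is a ring epimorphism, using that $i^*$ is a triangle functor out of $\Dcal(A)$ and that $\Hom_{\Ccal'}(i^*(A),i^*(A)[n])=0$ for $n\neq 0$. The key technical point is to produce the equivalence $F\colon\Ccal'\simeq\Dcal(B)$: here one uses that $\Ccal'$, being the left-hand term of a recollement of $\Dcal(A)$, is a compactly generated (indeed algebraic) triangulated category, and that $i^*(A)$ is a compact generator of $\Ccal'$ — compactness because $A$ is compact in $\Dcal(A)$ and $i^*$ preserves compactness (it has a right adjoint $i_*$ that preserves coproducts, as $i_*$ itself has the right adjoint $i^!$), and generation because $A$ generates $\Dcal(A)$ and $i^*$ is essentially surjective up to the recollement structure. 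By Keller's theorem (or the Morita theory for derived categories), an algebraic triangulated category with a compact exceptional generator whose endomorphism ring is $B$ is triangle equivalent to $\Dcal(B)$, via an equivalence sending the generator to $B$. Composing, $i_*\circ F^{-1}\colon\Dcal(B)\to\Dcal(A)$ is a full embedding sending $B$ to $A$ and commuting with coproducts, hence is of the form $\psi_*$ for a ring homomorphism $\psi\colon A\to B$; since $i_*F^{-1}$ is a full embedding, $\psi$ is a ring epimorphism, and since $i_*F^{-1}$ embeds all of $\Dcal(B)$, $\psi$ is even a homological epimorphism. One checks $\psi$ agrees with the map $A\to\End(i^*(A))=B$ constructed above, so $\varphi:=\psi$ does the job.

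The main obstacle is the converse direction, specifically the step constructing the equivalence $F\colon\Ccal'\simeq\Dcal(B)$ and verifying that $i^*(A)$ is a \emph{compact} generator of $\Ccal'$: compactness of $i^*(A)$ requires knowing $i_*$ preserves coproducts, which follows from $(i_*,i^!)$ being an adjoint pair, and generation requires a careful argument that the localizing subcategory of $\Ccal'$ generated by $i^*(A)$ is all of $\Ccal'$, using the canonical triangles of the recollement together with the fact that $A$ generates $\Dcal(A)$. Once $i^*(A)$ is established as a compact exceptional generator, invoking the derived Morita theorem is routine. Since this proposition is quoted from \cite{AKL1}, I would in the paper simply cite it, but the above is the argument I would reconstruct.
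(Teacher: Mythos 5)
The paper offers no proof of this statement; it is quoted verbatim from \cite[1.7]{AKL1}, so the only comparison available is with the standard argument of that reference, which your reconstruction does follow in outline: the forward direction via the adjunction between $F i^*$ and $\varphi_* = i_*F^{-1}$, correctly identifying $Fi^*(A)\cong A\lten_A B\cong B$ and hence exceptionality and $\End(i^*(A))\cong B$; and the converse via exhibiting $i^*(A)$ as a compact exceptional generator of $\Ccal'$ (compactness because $i_*$ preserves coproducts, having the right adjoint $i^!$; generation from $A$ generating $\Dcal(A)$ together with faithfulness of $i_*$) and invoking Keller's derived Morita theorem. All of that is fine.

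There is, however, a genuine error at the decisive step of the converse: you assert that $i_*\circ F^{-1}\colon\Dcal(B)\to\Dcal(A)$ ``sends $B$ to $A$''. It does not; it sends $B$ to $Y:=i_*i^*(A)$, which is essentially never $A$ (in a stratifying recollement it is $A/AeA$). The real content of the converse is precisely the identification of $Y$ with $B$, which your proposal skips: by the adjunction $(i^*,i_*)$ one has $H^n(Y)=\Hom_{\Dcal(A)}(A,Y[n])\cong\Hom_{\Ccal'}(i^*(A),i^*(A)[n])$, which vanishes for $n\neq 0$ by exceptionality, so $Y$ is a module concentrated in degree $0$, and for $n=0$ the same isomorphism (given by precomposition with the unit $\eta\colon A\to i_*i^*(A)$) identifies $Y\cong\End_{\Ccal'}(i^*(A))=B$, with $\eta$ corresponding to $1_B$. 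The ring homomorphism $\varphi\colon A\to B$ is this unit map, and one must then check that the right $A$-action on $Y$ is restriction of scalars along $\varphi$, so that $i_*F^{-1}\cong\varphi_*$; full faithfulness of $i_*F^{-1}$ then makes $\varphi$ a homological epimorphism. As written, your inference ``hence is of the form $\psi_*$'' rests on the false premise $G(B)=A$ and so has no support. A secondary point you should make explicit before quoting Keller's theorem is why $\Ccal'$ is algebraic: $i_*$ identifies it with $\Ker(j^*)$, a localizing subcategory of $\Dcal(A)$, which inherits a dg enhancement.
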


A surjective homomorphism $\varphi: A \ra B$ is an epimorphism. If it is
homological and its kernel $I$ is of the form $AeA$ with $e=e^2 \in A$ an
idempotent, then $I=AeA$ is called a {\em stratifying ideal}, see \cite{CPS}. For the purpose of this article, we then call the induced recollement a {\em stratifying recollement}. It is of the following form
\[
\xy (-46,0)*{\mathcal \Dcal(A/AeA)}; {\ar (-19,3)*{}; (-35,3)*{}_{i^\ast}};
{\ar (-35,0)*{}; (-19,0)*{}|{i_\ast=i_!}}; {\ar(-19,-3)*{}; (-35,-3)*{}^{i^!}};
(-12,0)*{{\mathcal D}(A)}; {\ar (11,3)*{}; (-5,3)*{}_{j_!}};
{\ar (-5,0)*{}; (11,0)*{}|{j^*=j^!}}; {\ar (11,-3)*{}; (-5,-3)^{j_*}};
(20,0)*{{\mathcal \Dcal(eAe)}};
\endxy
\]
where
\[
\begin{array}{c}
i^*=-\lten_A A/AeA,\ \ i^!=\rhom_A(A/AeA,-),\\
i_*=\rhom_{A/AeA}(A/AeA,-)=-\lten_{A/AeA}A/AeA=i_!,\\
j_!=-\lten_{eAe} eA,\ \  j_*=\rhom_{eAe}(Ae,-), \\
j^!=\rhom_A(eA,-)=-\lten_A Ae=j^*.
\end{array}
\]
Using this language, Question \ref{mainquestion} asks whether each recollement of derived module categories of rings is equivalent to a stratifying one.

\smallskip

At this point, it has to be noted that homological epimorphisms do not behave well under Morita or derived equivalences applied to the corresponding recollement. The following easy example shows that when formulating Question \ref{mainquestion} it is necessary to allow for changes of the data by Morita or derived equivalences.

Let $A=M_2(k)\times M_3(k)$, $B=M_2(k)$, and $\lambda:A\ra B$ the projection.
Then $\lambda$ is a homological epimorphism with stratifying kernel,
inducing a recollement with $B$ on the left hand side, $A$ in the middle and
$M_3(k)$ on the right hand side. Let $B'=k$, which is Morita equivalent to $B$. Then there is no ring homomorphism from $A$ to $B'$ at all, and in particular no homological epimorphism with stratifying kernel.

\subsection{Characterisations of stratifying recollements}

In the first answer to Question \ref{mainquestion}, we keep the ring $C$ fixed
and characterise when it can be realised as
the ring $eA'e$ in a stratifying recollement
equivalent to a given one:

\begin{thm} \label{maintheorem}
Fix rings $A$, $B$ and $C$ and a recollement
\[
\xy (-48,0)*{\mathcal \Dcal(B)}; {\ar (-25,2)*{}; (-35,2)*{}};
{\ar (-35,0)*{}; (-25,0)*{}^{}}; {\ar(-25,-2)*{}; (-35,-2)*{}};
(-15,0)*{{\mathcal D}(A)}; {\ar (5,2)*{}; (-5,2)*{}_{}};
{\ar (-5,0)*{}; (5,0)*{}}; {\ar (5,-2)*{}; (-5,-2)*{}};
(16,0)*{{\mathcal \Dcal(C)}.};
\endxy
\tag{$R$}
\]
Then the following statements are equivalent.

(a) There exist rings $A'$ and $B'$ that are derived equivalent to
$A$ and $B$ respectively, and an idempotent $e \in A'$ such that
$C = eA'e$, $B' = A'/A'eA'$,
and the projection $\pi: A' \rightarrow B'$ is a
homological epimorphism with stratifying kernel,
which induces a  recollement equivalent to (R).

(b) The complex $j_!(C)$ is a direct summand of a tilting complex
$T$ over $A$ such that $i^{\ast}(T)$ is exceptional.
\end{thm}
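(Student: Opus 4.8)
The plan is to prove the two implications separately, using Proposition \ref{akl} as the main bridge between recollements with a derived module category on the left and homological epimorphisms.

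For the direction (a) $\Rightarrow$ (b): suppose we are given $A'$, $B'$, $e$ as in (a). The stratifying recollement attached to the idempotent $e \in A'$ has $j_! = -\lten_{eA'e} eA'$, so $j_!(C) = j_!(eA'e) = eA'$, which is the projective $A'$-module $eA'$; more to the point, $A' = eA' \oplus (1-e)A'$ as a right $A'$-module, so $j_!(C)$ is a direct summand of the tilting complex $A'$ over $A'$. Transporting along the derived equivalence $\Dcal(A') \simeq \Dcal(A)$ and along the equivalence of recollements from (a), the image of $eA'$ becomes a direct summand of a tilting complex $T$ over $A$, and $j_!(C)$ (computed in the recollement (R)) is identified with this summand up to isomorphism in $\Dcal(A)$. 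It remains to check that $i^*(T)$ is exceptional: under the equivalence of recollements, $i^*$ corresponds to the functor $-\lten_{A'} B'$ in the stratifying recollement, and $i^*$ applied to the tilting complex $A'$ gives $B' = A'/A'eA'$ viewed in $\Dcal(B')$; since $\pi$ is a homological epimorphism, $B' \lten_{A'} B' \cong B'$ is exceptional in $\Dcal(B')$ (indeed $\Hom_{\Dcal(B')}(B', B'[n]) = 0$ for $n \neq 0$). Hence $i^*(T)$ is exceptional. One should be slightly careful here that $i^*$ of the whole tilting complex, not just of the summand $j_!(C)$, is required; but $i^*(A') = B'$ is precisely the regular module, which is exceptional, and this is exactly what is needed.

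For the direction (b) $\Rightarrow$ (a): assume $j_!(C)$ is a direct summand of a tilting complex $T$ over $A$ with $i^*(T)$ exceptional. Let $A' = \End_{\Dcal(A)}(T)$; by the Rickard--Keller theory of tilting complexes, $\Dcal(A) \simeq \Dcal(A')$ via a functor sending $T$ to the regular module $A'$, and this functor sends the direct sum decomposition $T = j_!(C) \oplus T'$ to an idempotent decomposition $A' = eA' \oplus (1-e)A'$ for a suitable idempotent $e \in A'$. Under this equivalence the recollement (R) is carried to an equivalent recollement of $\Dcal(A')$ in which $i^*(A')$ corresponds to $i^*(T)$, hence is exceptional; by Proposition \ref{akl}, this new recollement is induced by a homological epimorphism $\varphi: A' \to B'$ where $B' = \End_{\Dcal(A')}(i^*(A'))$, and $B'$ is derived equivalent to $B$ since the recollement is equivalent to (R). Now one must identify this homological epimorphism as the projection $A' \to A'/A'eA'$ with stratifying kernel, and identify $C$ with $eA'e$. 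The key computation: in the recollement of $\Dcal(A')$, the composite $j^! j_!$ is isomorphic to the identity on $\Dcal(C)$ (standard recollement identity, from $j_! $ being a section of $j^!$ up to the canonical triangles), and $j_!(C) = eA'$; evaluating $j^!$ at $eA'$ and comparing endomorphism rings shows $\End_{\Dcal(C)}(C) \cong \End_{\Dcal(A')}(j_!(C)) = \End_{\Dcal(A')}(eA') = eA'e$, giving $C \cong eA'e$. Finally, since $j_! j^! \to \mathrm{id}$ has cone in the image of $i_*$, and $j_!(C) = eA'$, the triangle for the regular module reads $eA' \to A' \to i_* i^*(A')$, whence $i^*(A') \cong A'/eA' \cdot(\text{something})$; pinning this down shows $B' = A'/A'eA'$ and that $\varphi$ is the projection, and that $\Tor$-vanishing for the homological epimorphism is exactly the statement that $A'eA'$ is a stratifying ideal.

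The main obstacle I expect is the bookkeeping in (b) $\Rightarrow$ (a): Proposition \ref{akl} hands us an abstract homological epimorphism $\varphi: A' \to B'$ with $i^*(A')$ exceptional, but to conclude it is \emph{surjective} with kernel exactly $A'eA'$ requires matching the six functors of the recollement produced by $\varphi$ with the six functors of the \emph{idempotent} recollement attached to $e$. The cleanest way is probably to show directly that $\varphi_*: \Dcal(B') \to \Dcal(A')$ and the embedding $\Dcal(A'/A'eA') \hookrightarrow \Dcal(A')$ have the same essential image (both equal $\{ X : j^*(X) = 0 \} = \{ X : X \lten_{A'} eA' = 0\}$), and then invoke uniqueness of the left-hand term of a recollement together with the fact that two homological epimorphisms from $A'$ with the same image-of-$\varphi_*$ are identified by an isomorphism of target rings (cf.\ the correspondence between homological epimorphisms and certain recollements). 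That a surjective homological epimorphism with idempotent kernel is the same as a stratifying ideal is then just the definition recalled before the theorem. A secondary subtlety is checking that the tilting complex $T$ in (b) can be taken with its given summand $j_!(C)$ surviving under the derived equivalence as the summand $eA'$ of the regular module rather than merely up to an indecomposable reshuffling; this is handled by choosing the equivalence $\Dcal(A) \simeq \Dcal(A')$ to be the standard one attached to $T$, which sends each indecomposable summand of $T$ to the corresponding indecomposable projective.
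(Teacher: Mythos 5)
Your proposal is correct and follows the same skeleton as the paper's proof: the direction (a)$\Rightarrow$(b) is essentially verbatim the paper's argument ($j_!(C)=eA'$ is a summand of the tilting complex $A'$, $i^{\ast}(A')=B'$ is the regular module hence exceptional, and both conditions are preserved under derived equivalence), and in (b)$\Rightarrow$(a) you pass to $A'=\End_{\Dcal(A)}(T)$ and identify $j_!(C)$ with $eA'$ exactly as the paper does. The one genuine divergence is how the homological epimorphism $A'\to B'$ is produced. You invoke Proposition \ref{akl} as a black box; the paper instead re-runs the underlying machinery: it replaces $A'$ by a cofibrant, hence $\mathbb{Z}$-flat, dg ring $A''$, applies \cite[Theorem 4]{NS1} (which requires this flatness) to obtain a dg homological epimorphism $A''\to B''$, uses exceptionality of $i^{\ast}(T)$ to see that $B''$ is quasi-isomorphic to the ordinary ring $H^0(B'')$, and then takes $0$-cohomology. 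Your shortcut is legitimate given Proposition \ref{akl} as stated -- it asserts exactly what you need, including $B'=\End(i^{\ast}(A'))$ -- but it conceals the flatness subtlety that the paper addresses explicitly, which is presumably why the authors chose to unwind the argument. The final identification of $\Ker\pi$ with $A'eA'$ is the same computation in both proofs, phrased via the same canonical triangle; note only that $j_!j^!(A')$ is $A'e\lten_{eA'e}eA'$ rather than $eA'$ as you wrote: its $H^0$ is $A'e\otimes_{eA'e}eA'$ mapping onto $A'eA'$, its $H^1$ vanishes, and $H^{-1}(B')=0$ because $B'$ is an ordinary ring, so the triangle collapses to the short exact sequence $0\to A'e\otimes_{eA'e}eA'\to A'\to B'\to 0$, giving surjectivity of $\pi$ and $\Ker\pi=A'eA'$ at once. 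One caution about the alternative you label the ``cleanest way'': comparing the essential image of $\varphi_*$ with that of $\Dcal(A'/A'eA')\to\Dcal(A')$ presupposes that the latter functor is fully faithful, i.e.\ that $A'eA'$ is already known to be stratifying, which is precisely what is to be proven; the canonical-triangle computation avoids this circularity and should be kept as the main line of argument.
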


\begin{proof}
Suppose (a) is given. Then the homological epimorphism $\pi$ induces a stratifying recollement
\[
\xy (-48,0)*{\mathcal \Dcal(B')}; {\ar (-25,2)*{}; (-35,2)*{}};
{\ar (-35,0)*{}; (-25,0)*{}^{}}; {\ar(-25,-2)*{}; (-35,-2)*{}};
(-15,0)*{{\mathcal D}(A')}; {\ar (5,2)*{}; (-5,2)*{}_{}};
{\ar (-5,0)*{}; (5,0)*{}}; {\ar (5,-2)*{}; (-5,-2)*{}};
(16,0)*{{\mathcal \Dcal(C)}.};
\endxy
\]
where $C = eA'e$ and $B' = A'/A'eA'$. The functor $j_!$ is the derived tensor functor
$- \lten_{eA'e} e A'$, which sends $C$ to the
projective module $j_!(C) = e A'$. Setting $T := A'$ shows that $j_!(C)$
is a direct summand of a tilting complex. The functor $i^{\ast}$ is
the derived tensor functor $- \stackrel{\mathbf L}{\otimes}_{A'} B'$, which
sends $A'$ to $i^{\ast}(A')=B'$, which is exceptional. Thus the recollement
induced by $\pi$ satisfies the conditions in (b). Moving from $A'$ and $B'$
to $A$ and $B$, respectively, by derived equivalences, does
not affect the conditions in (b), since a derived equivalence sends a
tilting complex to a tilting complex and an exceptional object to an
exceptional object. Hence the original recollement satisfies (b) as well.
\smallskip

Suppose (b) is satisfied. Set $A' := \End_{\cd(A)}(T)$. Write $T = T_1 \oplus T_2$, where
$T_1 = j_!(C)$. The tilting complex $T$ induces an equivalence
$\alpha: \Dcal(A) \xrightarrow{\sim} \Dcal(A')$, which we use to change the recollement into
one with middle term $\Dcal(A')$. The new functor $j_!$ sends $C$ to the image
of $T_1$ under the derived equivalence $\alpha$. By construction of $\alpha$, this image $j_!(C)=\alpha(T_1)$ is a
projective module $eA'$ for some idempotent $e$. Composing $j_!$  with a
derived auto-equivalence of $C$, if necessary, we may assume that $C=eA'e$ and the new $j_!$ is
the derived tensor functor $- \stackrel{\mathbf L}{\otimes}_{eA'e} e A'$.
Therefore, the other two functors on the right hand side of the recollement,
which are uniquely determined by being adjoints, are as required in a
stratifying recollement, too.

The ring $A'$ may in general not be a flat algebra over $\mathbb{Z}$. Therefore, we are now going to replace it by a flat dg ring $A''$, chosen as follows:
Let $f:A''\rightarrow A'$ be a cofibrant replacement of $A'$. This is provided as part of the model structure of the category of small dg categories constructed by Tabuada in \cite{Tabuada05}. Here the base ring is $\mathbb{Z}$. As shown in the proof of \cite[Lemma 5]{NS1}, $A''$ is flat over $\mathbb{Z}$.
By definition, $f$ is a quasi-equivalence in the sense of \cite[Section 7]{Keller}. In particular, the 0-cohomology of $A''$ is $A'$, and $f$ induces a derived equivalence $\cd(A'')\rightarrow \cd(A')$. Thus we obtain a recollement which has $\cd(A'')$ as the middle term and which is equivalent to the original one. By \cite[Theorem 4]{NS1} (which requires the dg ring $A''$ to be flat over $\mathbb{Z}$), the new recollement is induced by a homological epimorphism of dg rings $\varphi:A''\rightarrow B''$, where $B''$ is a dg endomorphism ring of $i^*(A'')$.

By construction of the new
recollement, $i^{\ast}(A'')$ equals what was $i^{\ast}(T)$ in the old
recollement. Therefore, $i^{\ast}(A'')$ is exceptional. As a consequence,
the dg endomorphism ring $B''$ is quasi-isomorphic to its $0$-cohomology $B':=H^0(B'')$, which is
an ordinary ring.
Returning from $A''$ to its derived equivalent  0-cohomology $A'$, we can replace the last recollement by an equivalent one with  left hand term $\cd(B')$, middle term $\cd(A')$, and right hand term unchanged.
Let $\pi: A'\ra B'$ be the $0$-cohomology of the homological epimorphism $\varphi:A''\ra B''$. Then $\pi$ is a homological epimorphism,
and the  triangle
$$A'e \lten_{eA'e} e A' \rightarrow A' \stackrel{\pi}
{\rightarrow} B' \rightarrow A'e \lten_{eA'e} e A'[1]$$
yields a short exact sequence
\[
\xymatrix{
0 \ar[r] & A'e \otimes_{eA'e} e A' \ar[r] & A' \ar[r]^{\pi} & B' \ar[r] & 0,}
\]
since both $H^1(A'e\lten_{eA'e} eA')$ and $H^{-1}(B'')$ vanish. Here, the kernel of $\pi$ is the multiplication map $A'e \otimes_{eA'e} eA' \ra A'$, whose image is $A'eA'$. Therefore $B'\simeq A/A'eA'$ and up to this isomorphism $\pi$ is identified with the quotient map $A'\ra A/A'eA'$, implying that the ideal $A'eA'$ is stratifying. \smallskip

The sequence of modifications to the given recollement
is summarised in the following diagram
\small
\begin{eqnarray*}
\xymatrix@C=3pc@R=1.3pc{
\Dcal(B) \ar@{=}[d]
\ar[r] &\Dcal(A) \ar@<+1.2ex>[l]
\ar@<-1.2ex>[l] \ar[r] \ar[d]^{\simeq} &
\Dcal(C) \ar@<+1.2ex>[l] \ar@<-1.2ex>[l] \ar@{=}[d] \\
\Dcal(B) \ar[d]^{\simeq}
\ar[r] &\Dcal(A') \ar@<+1.2ex>[l]
\ar@<-1.2ex>[l] \ar[r] \ar[d]^\simeq &
\Dcal(eA'e) \ar@<+1.2ex>[l] \ar@<-1.2ex>[l] \ar@{=}[d] \\
\Dcal(B'') \ar[d]^{\simeq}
\ar[r] &\Dcal(A'') \ar@<+1.2ex>[l]
\ar@<-1.2ex>[l] \ar[r] \ar[d]^\simeq &
\Dcal(eA'e) \ar@<+1.2ex>[l] \ar@<-1.2ex>[l] \ar@{=}[d] \\
\Dcal(B') \ar[d]^{\simeq}
\ar[r] &\Dcal(A') \ar@<+1.2ex>[l]
\ar@<-1.2ex>[l] \ar[r] \ar@{=}[d] &
\Dcal(eA'e) \ar@<+1.2ex>[l] \ar@<-1.2ex>[l] \ar@{=}[d] \\
\Dcal(A'/A'eA') \ar[r] &\Dcal(A') \ar@<+1.2ex>[l]
\ar@<-1.2ex>[l] \ar[r] &
\Dcal(eA'e). \ar@<+1.2ex>[l] \ar@<-1.2ex>[l]
}
\end{eqnarray*}
\end{proof}

When we relax the condition on $C$ to being not necessarily isomorphic, but
at least derived equivalent to $eA'e$ in the stratifying recollement, the
answer to Question \ref{mainquestion} is as follows:

\begin{cor} \label{maintheorem-variation}
Fix rings $A$, $B$ and $C$ and a recollement
\[
\xy (-48,0)*{\mathcal \Dcal(B)}; {\ar (-25,2)*{}; (-35,2)*{}};
{\ar (-35,0)*{}; (-25,0)*{}^{}}; {\ar(-25,-2)*{}; (-35,-2)*{}};
(-15,0)*{{\mathcal D}(A)}; {\ar (5,2)*{}; (-5,2)*{}_{}};
{\ar (-5,0)*{}; (5,0)*{}}; {\ar (5,-2)*{}; (-5,-2)*{}};
(16,0)*{{\mathcal \Dcal(C)}.};
\endxy
\tag{$R$}
\]
Then the following statements are equivalent.

(a) There exist rings $A'$, $B'$ and $C'$ that are derived equivalent to
$A$, $B$ and $C$ respectively, and an idempotent $e \in A'$ such that
$C' = eA'e$, $B' = A'/A'eA'$ and the projection $\pi: A' \rightarrow B'$ is a
homological epimorphism with stratifying kernel
which induces a recollement  equivalent to (R).

(b) There exists a tilting complex $T_0$ over $C$
such that the
complex $j_!(T_0)$ is a direct summand of a tilting complex $T$
over $A$ with $i^{\ast}(T)$ being exceptional.
\end{cor}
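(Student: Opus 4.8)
The plan is to deduce Corollary~\ref{maintheorem-variation} from Theorem~\ref{maintheorem} by a reduction argument along the tilting complex $T_0$ over $C$. The key observation is that a tilting complex $T_0$ over $C$ induces a derived equivalence $\beta\colon\Dcal(C)\xrightarrow{\sim}\Dcal(C_0)$, where $C_0:=\End_{\Dcal(C)}(T_0)$, and that applying $\beta$ to the right-hand term of the given recollement~(R) produces an \emph{equivalent} recollement with right-hand term $\Dcal(C_0)$ in which the new functor $j_!^{\,0}$ is, up to composing with a derived auto-equivalence of $C_0$ (cf.\ the argument in the proof of Theorem~\ref{maintheorem}), the composite carrying a progenerator of $C_0$ to $j_!(T_0)$. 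Under this identification the condition ``$j_!(T_0)$ is a direct summand of a tilting complex $T$ over $A$ with $i^*(T)$ exceptional'' is precisely the condition (b) of Theorem~\ref{maintheorem} applied to the new recollement relating $A$, $B$, $C_0$.

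Concretely, I would argue as follows. \textbf{(b)$\Rightarrow$(a).} Given $T_0$ and $T$ as in (b), form $C_0=\End_{\Dcal(C)}(T_0)$ and the derived equivalence $\beta$; replace (R) by the equivalent recollement $(R_0)$ relating $\Dcal(B)$, $\Dcal(A)$, $\Dcal(C_0)$ obtained by post-composing the right-hand functors with $\beta$ and its quasi-inverse. The new $j_!^{\,0}$ sends a projective generator $P_0$ of $C_0$ to (an object isomorphic to) $j_!(T_0)$; since $j_!(T_0)$ is by hypothesis a direct summand of a tilting complex $T$ over $A$ with $i^*(T)$ exceptional, the recollement $(R_0)$ satisfies condition~(b) of Theorem~\ref{maintheorem}. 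Applying that theorem yields rings $A'$ (derived equivalent to $A$), $B'$ (derived equivalent to $B$) and an idempotent $e\in A'$ with $eA'e\cong C_0$, $B'=A'/A'eA'$, the projection $\pi\colon A'\to B'$ a homological epimorphism with stratifying kernel, inducing a recollement equivalent to $(R_0)$, hence to (R). Setting $C':=eA'e=C_0$, which is derived equivalent to $C$, gives exactly the data required in (a).

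\textbf{(a)$\Rightarrow$(b).} Conversely, suppose (a) holds with rings $A',B',C'$ and idempotent $e$. Since $C'$ is derived equivalent to $C$, there is a tilting complex $T_0$ over $C$ with $\End_{\Dcal(C)}(T_0)\cong C'$; the associated equivalence $\beta\colon\Dcal(C)\xrightarrow{\sim}\Dcal(C')$ carries $T_0$ to a progenerator $P$ of $C'$. Transporting the stratifying recollement for $\pi$ through the derived equivalences $A'\sim A$, $B'\sim B$, $C'\sim C$ (and absorbing $\beta$), we obtain a recollement equivalent to (R) in which the right-hand functor $j_!$ carries $T_0$ to $j_!^{\,\text{strat}}(P)$, a projective $A'$-module $eA'$, hence a direct summand of the tilting complex $A'$; and $i^*(A')=B'$ is exceptional. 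Pulling back along the derived equivalences to the original recollement, $j_!(T_0)$ is a direct summand of a tilting complex $T$ over $A$ (the image of $A'$) with $i^*(T)$ exceptional, which is (b).

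The main technical point --- the place where care is needed rather than genuine difficulty --- is bookkeeping the compositions of derived equivalences applied to the \emph{right-hand} term of a recollement and checking that the six functors transform correctly, in particular that $j_!$ composed with the equivalence $\beta^{-1}$ still sends the tilting complex $T_0$ to the expected object and that the remaining four functors, being determined as adjoints, automatically match up; this is exactly the kind of argument already carried out in detail in the proof of Theorem~\ref{maintheorem}, so here it amounts to invoking that theorem for the modified recollement after the single extra change-of-rings by $T_0$ on the right. No new ideas beyond Theorem~\ref{maintheorem} are required; the corollary is the ``relative'' version obtained by allowing the one further derived equivalence on $C$.
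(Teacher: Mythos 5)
Your proposal is correct and follows essentially the same route as the paper: use the tilting complex $T_0$ to replace $\Dcal(C)$ by the derived equivalent $\Dcal(\End_{\Dcal(C)}(T_0))$ in the right-hand term of the recollement, and then apply Theorem~\ref{maintheorem} to the modified recollement. Your additional bookkeeping of how the six functors transport is sound but not a departure from the paper's (much terser) argument.
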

\begin{proof} Denote $\End_{\cd(C)}(T_0)$ by $C'$.
The tilting complex $T_0$ in (b) induces an equivalence of $\Dcal(C)$ with
$\Dcal(C')$. Using this equivalence, the given recollement can be changed into one with  $\Dcal(C')$ on the right hand side. Applying Theorem \ref{maintheorem} to this recollement proves the equivalence of (a) and (b).
\end{proof}

\begin{rem} In special situations, part of condition (b) may be dropped. Here is an example:
Suppose $A$ is a finite-dimensional algebra over a field with only two isomorphism classes of simple modules.
Then (b) is equivalent to
\begin{itemize}
\item[(b')] the complex $j_!(C)$ can be completed to a tilting complex $T$ over $A$.
\end{itemize}
In fact, in this case, both $B$ and $C$ are local algebras by \cite[Proposition 6.5]{AKLY2}. Therefore, any tilting complex $T_0$ over $C$ is a projective generator, so $j_!(T_0)$ can be completed to a tilting complex if and only if $j_!(C)$ can be completed to a tilting complex. Now assume that $j_!(C)$ can be completed to a tilting complex $T$. Then $i^*(T)$, being compact in $\cd(B)$, either is a shifted projective module or it has self-extensions in positive degrees, see for instance \cite[2.11-2.13]{RZ}.
But by \cite[Proposition 6.6]{AKLY2} (or the more general \cite[Theorem 4.8]{IyamaYang14}), $i^*(T)$ is a silting object of $K^b(\mathrm{proj} A)/\text{{thick}}{\,j_!(C)}\cong K^b(\mathrm{proj} B)$, so the latter case does not occur. That is, $i^*(T)$ is exceptional.
\end{rem}

\begin{rem}
 In \cite{Koenig91}, the existence of recollements of derived module categories has been characterised in terms of the existence of two exceptional complexes satisfying certain orthogonality conditions. The complex $j_!(C)$ always is exceptional. If $i_{\ast}(i^{\ast}(T))$ is exceptional, too, these two complexes together satisfy the conditions in the characterisation. Thus, exceptionality of $i_{\ast}(i^{\ast}(T))$, which corresponds to $i^{\ast}(T)$ being exceptional, can be understood as restating the existence of the recollement.  When taking this point of view, the additional condition needed for this recollement to be  stratifying (up to equivalence)
 is that $j_!(C)$ can be completed to a tilting complex.
\end{rem}

\begin{rem}
Given $A$ and $C =eAe$, the exact functor $- \cdot e$ can be used to construct
a `half recollement', which is the right hand side (involving $A$ and $C$) of the stratifying recollement investigated here. The left hand side then can be completed by taking
 the derived category of some dg ring. The problem, however, is to construct the left hand side as the derived category of an ordinary ring. This is not always possible. There do exist examples of recollements, with given $A$ and $C=eAe$, where the left hand side cannot be a derived module category. This happens for instance, if $A$ has finite global dimension, but the endomorphism ring $C$ of some exceptional (or even projective) object has infinite global dimension,
 see e.g.~\cite[Proposition 2.14]{AKLY2}.
\end{rem}

\subsection{A counterexample}

Here is an example of a recollement that cannot be turned into a stratifying one
by replacing $A$, $B$ and $C$ by derived equivalent algebras. In other words, the following recollement does not satisfy condition (b) in Theorem \ref{maintheorem} nor in Corollary \ref{maintheorem-variation}.

\begin{ex}

In \cite[Example 4.4]{LVY}, the following algebra is studied:

Let $k$ be a field and let $A$ be the $k$-algebra given by quiver and
relations
\[\xymatrix{&&2\ar@<.7ex>[dd]^{\gamma}\ar@<-.7ex>[dd]_{\beta}\\
1\ar[rru]^{\alpha}&&&,\\
&& 3\ar[llu]^{\delta}}\hspace{10pt}\xymatrix{\\
\beta\alpha =0,~~ \alpha\delta=0,~~\delta\gamma=0.}\]
The simple module $S_1$ supported at $1$ is a compact exceptional
module of projective dimension $2$. It has a minimal projective resolution over $A$
given by the exact sequence
\[\xymatrix{0\ar[r] &P_2\ar[r]^{\beta} & P_3\ar[r]^{\delta} & {P_1}\ar[r] &
S_1\ar[r]&0}.\]

As shown in \cite{LVY}, setting $e = e_2 + e_3$ the algebra $A$ has a
stratifying ideal $AeA$ and thus a stratifying recollement
\[
\xy (-48,0)*{\mathcal \Dcal(A/AeA)}; {\ar (-25,2)*{}; (-35,2)*{}};
{\ar (-35,0)*{}; (-25,0)*{}^{}}; {\ar(-25,-2)*{}; (-35,-2)*{}};
(-15,0)*{{\mathcal D}(A)}; {\ar (5,2)*{}; (-5,2)*{}_{}};
{\ar (-5,0)*{}; (5,0)*{}}; {\ar (5,-2)*{}; (-5,-2)*{}};
(16,0)*{{\mathcal \Dcal(eAe)}.};
\endxy
\]
where $A/AeA$ is one-dimensional, \emph{i.e.} isomorphic to the ground field $k$,
and as a right $A$-module isomorphic to the simple module $S_1$. The
algebra $eAe$ is isomorphic to the Kronecker algebra, hence hereditary.
The algebra $A$ has finite global dimension, and therefore it is possible
to mutate (that is, extend) the above recollement downwards, by \cite[Section 3]{AKLY2}. Thus, there is a recollement
\[
\xy (-48,0)*{\mathcal \Dcal(B=eAe)}; {\ar (-25,2)*{}; (-35,2)*{}};
{\ar (-35,0)*{}; (-25,0)*{}^{}}; {\ar(-25,-2)*{}; (-35,-2)*{}};
(-15,0)*{{\mathcal D}(A)}; {\ar (5,2)*{}; (-5,2)*{}_{}};
{\ar (-5,0)*{}; (5,0)*{}}; {\ar (5,-2)*{}; (-5,-2)*{}};
(16,0)*{{\mathcal \Dcal(C=k)}.};
\endxy
\]
where $j_!(C)$ equals $S_1$. Extending earlier work of Rickard and Schofield,
it has been checked in \cite{LVY} that $S_1$ cannot be a direct summand of a
tilting complex, that is, $j_!(C)$ fails the condition in (b).
Hence this recollement cannot be turned into a stratifying one
by changing $A$ and $B$. Moreover, the auto-equivalences of $\Dcal(k)$ are
compositions of Morita equivalences and shifts. Therefore, replacing
$C$ by a derived equivalent algebra $C'$ does not remove the obstruction
to extending $j_!(T_0)$.

It follows that condition (b) in Theorem \ref{maintheorem} and also in Corollary \ref{maintheorem-variation} fails.

Note that $i^{\ast}(A)$ as a module over the Kronecker algebra $eAe$ is a
direct sum of projective modules and a quasi-simple regular module $M$. Since
$M$ has self-extensions, $i^{\ast}(A)$ is not exceptional. Therefore, by
Proposition~\ref{akl}, this recollement is not induced by a homological
epimorphism.

This recollement also restricts to recollements on the level of bounded or left
or right bounded derived categories
by \cite[Proposition 4.12]{AKLY2}. Homotopy categories of projectives in this case are covered, too, since they coincide with the bounded derived categories. So, Question \ref{mainquestion} has a negative answer
for all these choices of derived module categories.
\end{ex}

\section{Surjective homological epimorphisms and stratifying
ideals}

When is a ring epimorphism $\varphi:A\to B$ equivalent to a  homological
epimorphism $A \rightarrow A/AeA$ with stratifying kernel? Of course, one first
has to decide if $\varphi$ is surjective. The main result of this Section,
Theorem \ref{surjepi}, provides a
criterion for that. Once surjectivity is known, well-known facts
can be used to decide if the kernel is idempotent or even stratifying.

Recall that a ring $R$ is called {\it semilocal} if the quotient ring $R/\rad(R)$ is semisimple artinian, and it is
{\it right perfect} if in addition the Jacobson radical $\rad(R)$ is a left-t-nilpotent ideal of $R$, i.~e.~for any sequence of elements $a_1,a_2,a_3,\ldots\in \rad(R)$ there is an integer $n>0$ such that $a_n a_{n-1}\ldots a_1=0$.
\begin{thm} \label{surjepi}
Let  $\varphi: A \rightarrow B$ be a ring
epimorphism with $A$  right (or left) perfect and $B$ semilocal.
Suppose that $B$ is basic, that is, $B/\rad(B)$ is a product of
skew-fields.
 Then  $\varphi$ is surjective.
Moreover, $\varphi$ has a stratifying kernel if it is a homological epimorphism.\end{thm}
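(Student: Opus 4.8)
The plan is to treat surjectivity and the structure of the kernel separately. Since a ring epimorphism $\varphi$ is surjective exactly when $\varphi^{\mathrm{op}}$ is, and since $A$ is left perfect exactly when $A^{\mathrm{op}}$ is right perfect (and the hypotheses on $B$ are stable under passing to the opposite ring), I may assume throughout that $A$ is \emph{right} perfect.

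\emph{Surjectivity.} First I record two facts. (i) Since $\rad A$ is left $T$-nilpotent, a constant sequence shows $\rad A$ is a nil ideal. (ii) Since $B$ is basic, $B/\rad B$ is a product of skew-fields, hence reduced; so the ring homomorphism $A\xrightarrow{\varphi}B\twoheadrightarrow B/\rad B$ annihilates the nil ideal $\rad A$, i.e.\ $\varphi(\rad A)\subseteq\rad B$. Now suppose, for a contradiction, that $\varphi$ is not surjective. Then $C:=\Coker(\varphi)=B/\varphi(A)$, viewed as a right $A$-module, is non-zero, and since $A$ is right perfect $C$ has a simple quotient $C\twoheadrightarrow S$ with $S=fA/f\rad A$ for a primitive idempotent $f\in A$. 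By the characterisation of ring epimorphisms recalled above, $C\otimes_A B=0$; tensoring $C\twoheadrightarrow S$ with $B$ over $A$ gives $S\otimes_A B=0$. On the other hand, applying $-\otimes_A B$ to $0\to f\rad A\to fA\to S\to 0$ identifies $S\otimes_A B$ with $\varphi(f)B/\varphi(f)\varphi(\rad A)B$, which by (ii) surjects onto $\varphi(f)(B/\rad B)$. Hence $\varphi(f)(B/\rad B)=0$, so the idempotent $\varphi(f)$ lies in $\rad B$ and therefore $\varphi(f)=0$. But then the right $A$-linear epimorphism $B\twoheadrightarrow C\twoheadrightarrow S$ must kill $B\cdot f=B\varphi(f)=0$, forcing $S\cdot f=0$; this contradicts $S\cdot f\ni\bar f\neq 0$, where $\bar f$ is the non-zero image of $f$ in $S$. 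Thus $C=0$ and $\varphi$ is surjective.

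\emph{The kernel.} Assume in addition that $\varphi$ is a homological epimorphism; by the first part $B=A/I$ with $I=\ker\varphi$ and $\Tor_1^A(A/I,A/I)=0$. Since $\Tor_1^A(A/I,A/I)\cong I/I^2$, we obtain $I=I^2$. To realise $I$ as $AeA$ I use that a right perfect ring is semiperfect: fix primitive orthogonal idempotents $1=\sum_i f_i$ and set $e=\sum_{f_i\in I}f_i$, so $AeA\subseteq I$. Decomposing $I=\bigoplus_i f_iI$ and using that each $f_iA$ has unique maximal submodule $f_i\rad A$, one checks that $f_iI=f_iA$ when $f_i\in I$ and $f_iI\subseteq f_i\rad A$ otherwise; hence $I\subseteq AeA+\rad A$. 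Consequently $\bar I:=I/AeA$ is an ideal of $\bar A:=A/AeA$ contained in $\rad\bar A$, and it is idempotent ($\bar I\,\bar I=(I^2+AeA)/AeA=\bar I$), so $\bar I=\bar I\,\bar I\subseteq\bar I\rad\bar A\subseteq\bar I$, i.e.\ $\bar I=\bar I\rad\bar A$. As $\bar A$ is again right perfect, the radical of every right $\bar A$-module is superfluous, which forces $\bar I=0$. Therefore $I=AeA$, and since $A\to A/AeA$ is a homological epimorphism, $AeA$ is by definition a stratifying ideal.

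\emph{Main obstacle.} The delicate part is surjectivity, and precisely the clash between the two descriptions of $S\otimes_A B$: ``$A$ perfect'' is what manufactures the simple quotient $S$, while ``$B$ basic'' is exactly what forces $\varphi(\rad A)\subseteq\rad B$; without the latter a ring epimorphism need not be surjective at all, as the inclusion of the upper triangular matrices into the full $2\times 2$ matrix ring shows. The only bookkeeping subtlety is that a right perfect ring governs its \emph{right} modules through superfluous radicals, so in the kernel part one should argue with right $\bar A$-modules under the right-perfect hypothesis and with left $\bar A$-modules under the left-perfect one — equivalently, dualise.
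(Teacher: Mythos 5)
Your proof is correct, and the surjectivity argument takes a genuinely different route from the paper's. The paper first factors $\varphi$ through its image to reduce to the case of an injective epimorphism, passes to the semisimple quotients $\bar A\to\bar B$ (using, exactly as you do, that left $T$-nilpotence makes $\rad(A)$ nil and that $B/\rad(B)$ is reduced), proves injectivity of $\bar\varphi$ by lifting an idempotent of the kernel modulo the $T$-nilpotent radical, and then invokes two auxiliary results: Proposition~\ref{surjsimples} (a ring epimorphism into a semilocal ring is surjective iff every simple $B$-module remains simple over $A$) and Lemma~\ref{injsemisimple} (injective ring epimorphisms between semisimple rings are isomorphisms, a special case of Storrer's saturation). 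You instead work directly with the cokernel: right perfectness of $A$ produces a simple quotient $S=fA/f\rad(A)$ of $\Coker(\varphi)$, and computing $S\otimes_AB$ in two ways --- it vanishes because $\Coker(\varphi)\otimes_AB=0$, yet it surjects onto $\varphi(f)\,(B/\rad(B))$ --- forces the idempotent $\varphi(f)$ into $\rad(B)$, hence $\varphi(f)=0$, contradicting $Sf\neq 0$. This is shorter, avoids both the reduction to the injective case and the idempotent lifting, and makes visible that only reducedness of $B/\rad(B)$ (rather than the full basic hypothesis) enters, which matches the paper's closing remark that ``basic'' can be relaxed to a radical inclusion. For the kernel statement the paper simply quotes, via Lemma~\ref{surjepitor}, the facts that $\Tor^A_1(B,B)=0$ iff $\Ker(\varphi)$ is idempotent (Bergman--Dicks) and that idempotent ideals of perfect rings are generated by idempotents (Michler); you reprove both from scratch, correctly, so your treatment is more self-contained but mathematically the same there.
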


The crucial point here is to prove the surjectivity of $\varphi$. The proof will use the following characterisation of surjective ring epimorphisms as well as a consequence of this characterisation.

\begin{prop} \label{surjsimples}
Let  $\varphi: A \rightarrow B$ be a ring
epimorphism with $B$ semilocal. Then $\varphi$ is surjective if and only if each simple $B$-module
is simple as an $A$-module.
\end{prop}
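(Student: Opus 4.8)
The plan is to prove both implications of Proposition~\ref{surjsimples} by exploiting the fact that a ring epimorphism $\varphi\colon A\to B$ makes $\MB$ a full subcategory of $\MA$ closed under subobjects, quotients and extensions, together with the semilocality of $B$.

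For the forward direction, suppose $\varphi$ is surjective. Then $B\cong A/\Ker\varphi$, and every $B$-module is simply an $A$-module annihilated by $\Ker\varphi$; in particular the lattice of $B$-submodules of a $B$-module coincides with the lattice of its $A$-submodules. Hence a simple $B$-module is simple as an $A$-module. This direction is immediate and requires no semilocality.

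For the converse, assume each simple $B$-module is simple as an $A$-module; I want to conclude $\varphi$ is surjective. First I would reduce to analysing the image $R:=\varphi(A)$, a subring of $B$ with $R\hookrightarrow B$ still a ring epimorphism (composing $\varphi$ with a surjection does not destroy the epimorphism property, and the inclusion $R\to B$ is an epimorphism because $B\otimes_A B = B$ forces $B\otimes_R B = B$). So it suffices to show that a ring epimorphism $R\to B$ which is injective and such that every simple $B$-module is simple over $R$ must be an isomorphism. Since $B$ is semilocal, $J:=\rad(B)$ has $B/J$ semisimple artinian, so $B$ has only finitely many simple modules $S_1,\dots,S_n$ up to isomorphism, and the hypothesis says each $S_i$ is $R$-simple. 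The key structural input is that $\MB\subseteq \MR$ is closed under extensions: therefore every $B$-module of finite length whose composition factors are among the $S_i$ has the same composition series viewed over $R$, and in particular $B/J^k$ is a module of finite length over $R$ with the same length as over $B$. I would then argue that $R$ acts ``densely'': because $B$ is a ring epimorphic image and $B/J^k$ has the same simple constituents over $R$ and over $B$, the map $R\to B/J^k$ is surjective for every $k$ — indeed its image is a subring of $B/J^k$ which is a $B/J^k$-epimorphism (epimorphisms are preserved under base change $-\otimes_B B/J^k$) and which contains ``enough'' to generate, so one gets $R/( R\cap J^k)\xrightarrow{\sim} B/J^k$. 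Passing through the chain $R\cap J\supseteq R\cap J^2\supseteq\cdots$ and using right perfectness-type finiteness is not available here (we only assume $B$ semilocal), so instead I would finish by the standard fact (cf.~\cite[Chapter XI]{Ste}) that if $R\to B$ is a ring epimorphism with $B$ semilocal and $R\to B/\rad(B)$ surjective, then $R\to B$ is surjective; and $R\to B/\rad(B)$ is surjective precisely because $B/\rad(B)$ is a finite product of the $R$-simple modules $\End$-acting faithfully, so the image of $R$ is a subring of the semisimple ring $B/\rad(B)$ which is an epimorphic image and contains a copy of each simple — by the Artin--Wedderburn structure and density, such a subring is everything.

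The main obstacle I anticipate is the last step: extracting surjectivity onto the semisimple quotient $B/\rad(B)$ from the mere hypothesis that the simples stay simple, and then lifting surjectivity modulo the radical to genuine surjectivity without a nilpotency or completeness assumption on $\rad(B)$. The clean way around this is to invoke the characterisation of ring epimorphisms via $\Coker(\varphi)\otimes_A B=0$ stated in the excerpt: writing $C=\Coker(\varphi)$, an $A$-$B$-bimodule with $C\otimes_A B=0$, I would show $C=0$ by using that $C$ is generated over $B$ by the image of $1$, examining $C/CJ$, which is a $B/J$-module, and showing it embeds into (a quotient of) $B/\rad(B)$ compatibly, so that the simple-stays-simple hypothesis forces $C/CJ=0$; then semilocality of $B$ (hence $J=\rad(B)$ being the Jacobson radical, so $CJ=C$ implies $C=0$ by Nakayama once $C$ is finitely generated over $B$, which it is, being cyclic). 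Making the bookkeeping in this Nakayama-style argument precise — in particular verifying that $C$ really is finitely generated as a $B$-module and that $C\otimes_A B=0$ translates into $C=CJ$ — is where the actual work lies, but it is routine homological algebra once the setup is in place.
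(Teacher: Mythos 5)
Your overall strategy --- reduce to the image of $\varphi$, pass to the semisimple quotient $B/\rad(B)$, then lift surjectivity back --- matches the shape of the paper's proof, but both places where real work is required contain gaps. A minor one first: the essential image of $\text{Mod-}B$ in $\text{Mod-}A$ is \emph{not} closed under extensions for a general ring epimorphism (that closure is equivalent to $\Tor_1^A(B,B)=0$, which is not assumed here); fortunately you only need that a $B$-composition series remains an $A$-composition series, which follows directly from the hypothesis. The serious gap is the surjectivity of $A\to B/\rad(B)$: ``contains enough to generate'' and ``by Artin--Wedderburn and density'' are not arguments, and nothing in the hypothesis tells you a priori that the image of $A$ in $B/\rad(B)$ ``contains a copy of each simple''. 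The paper's argument for exactly this step is short and you should adopt it: once $B$ is assumed semisimple, the hypothesis makes $B$ a semisimple right $A$-module, so if a simple direct summand $S$ of $B$ is not contained in $\Img(\varphi)$, then $\Img(\varphi)\cap S$ is a proper $A$-submodule of the $A$-simple module $S$, hence zero, whence $S$ embeds as a direct summand of $\Coker(\varphi)$; but $\Coker(\varphi)\otimes_A B=0$ then forces $S\otimes_A B=0$, contradicting $S\otimes_A B=\varphi^{\ast}\varphi_{\ast}(S)\simeq S$.

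The second gap is your Nakayama-style lifting of surjectivity modulo the radical to genuine surjectivity: $\Coker(\varphi)=B/\Img(\varphi)$ carries no natural $B$-module structure, since $\Img(\varphi)$ is a subring of $B$ and not a one-sided $B$-submodule; in particular it is not ``generated over $B$ by the image of $1$'' (that image is zero), so the scheme ``$C$ is cyclic over $B$, $C=CJ$, hence $C=0$ by Nakayama'' cannot even be set up. You have correctly located the crux of the whole proof here, but be warned that it is not ``routine homological algebra'': the localisation $\mathbb{Z}\to\mathbb{Z}_{(p)}$ is a non-surjective ring epimorphism into a local ring whose unique simple module $\mathbb{Z}/p\mathbb{Z}$ is simple over $\mathbb{Z}$, so no purely formal Nakayama argument can bridge the passage from $B/\rad(B)$ to $B$ without additional finiteness input (the paper's own proof dispatches this reduction with a one-line appeal to Nakayama's lemma, and in the intended application, Theorem~\ref{surjepi}, the ring $A$ is assumed perfect). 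In summary: replace the density argument by the cokernel argument above, and treat the lifting step as the genuine obstruction rather than as bookkeeping.
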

\begin{proof}
The only-if-part is clear. To prove the converse, assume that all simple
$B$-modules are simple as $A$-modules, too.
Set $\bar{B}=B/\rad(B)$.
 Clearly, the composition $\pi: A \rightarrow \bar{B}$ of
$\varphi$ with the canonical projection $B\to\bar{B}$
is a ring epimorphism such that all simple $\bar{B}$-modules are simple as $A$-modules, and by Nakayama's lemma
it suffices to show that $\pi$ is  surjective. So we can assume w.l.o.g.~that $B$ is semisimple artinian.

 Suppose now that there is an indecomposable direct summand $S$ of $B$, hence a
simple $B$-module, which is not contained in the image of $\varphi$. Then the
intersection $\Img(\varphi) \cap S$ is a proper $A$-submodule of the simple $B$-module $S$, which by assumption also is a simple $A$-module. Thus,
$\Img(\varphi) \cap S = 0$ and
$S$ is a direct summand of the cokernel of $\varphi$.

As mentioned above in Section 2.1, an equivalent condition of $\varphi$ being a ring
epimorphism is that $\Coker(\varphi) \otimes_A {B} = 0$,
which implies
$S \otimes_A {B} = 0$. But $S \otimes_A B=\varphi^\ast\varphi_\ast(S)\simeq S$, yielding a contradiction. So, $\varphi$ must be surjective.
\end{proof}

The following consequence of Proposition \ref{surjsimples} is a special case of results by Storrer \cite{Storrer}.
Storrer shows (\cite[Corollary 5.4]{Storrer})
that self-injective rings, hence in particular
semisimple rings, are saturated. Here, $R$ saturated means there is no
non-trivial
injective ring epimorphism
starting in $R$.

\begin{lemma} \label{injsemisimple}
Injective ring epimorphisms between semisimple rings are isomorphisms.
\end{lemma}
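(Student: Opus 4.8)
The plan is to deduce Lemma \ref{injsemisimple} from Proposition \ref{surjsimples} together with a dimension/length count. Let $\varphi\colon R\to S$ be an injective ring epimorphism with both $R$ and $S$ semisimple. Since a semisimple ring is in particular semilocal (indeed equal to its own semisimple quotient), Proposition \ref{surjsimples} applies: to conclude that $\varphi$ is surjective, hence an isomorphism, it suffices to check that every simple $S$-module is simple as an $R$-module. So the whole lemma reduces to this one verification.

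To see that restriction along $\varphi$ sends simple $S$-modules to simple $R$-modules, first recall that because $\varphi$ is a ring epimorphism, the restriction functor $\varphi_*\colon \mathrm{Mod}\text{-}S\to\mathrm{Mod}\text{-}R$ is a full embedding; in particular it is faithful and reflects isomorphisms, and for $S$-modules $M$, $N$ one has $\Hom_R(M,N)=\Hom_S(M,N)$. Now let $V$ be a simple $S$-module and suppose $0\ne U\subseteq \varphi_*(V)$ is an $R$-submodule. The key point is that $S$ is semisimple, so the inclusion $U\hookrightarrow \varphi_*(V)$, viewed after applying $-\ten_R S$ (or equivalently using that $\varphi_*$ has the exact left adjoint $\varphi^*=-\ten_R S$), must split off a nonzero $S$-submodule of $V$; since $V$ is simple over $S$, this forces the $S$-span of $U$ to be all of $V$. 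One then argues that $U$ was already all of $V$: because $\varphi$ is an epimorphism, $\varphi^*\varphi_*(V)\cong V$ canonically, and the image of $U\ten_R S\to V$ is the $S$-submodule generated by $U$, which we just showed is $V$; but this map factors through $U$ itself composed with $U\to V$, forcing $U=V$. Hence $\varphi_*(V)$ is simple over $R$.

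With that in hand, Proposition \ref{surjsimples} gives that $\varphi$ is surjective. An injective surjective ring homomorphism is an isomorphism, so $\varphi$ is an isomorphism, which is exactly the claim.

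The main obstacle I expect is making the step ``a nonzero $R$-submodule of $\varphi_*(V)$ generates $V$ as an $S$-module, and in fact equals $V$'' fully rigorous; the slick way is to use that $\varphi^*\varphi_*\cong \mathrm{id}$ on $\mathrm{Mod}\text{-}S$ (a restatement of $\varphi$ being a ring epimorphism, via $\Coker(\varphi)\ten_A B=0$), together with right-exactness of $\varphi^*=-\ten_R S$ applied to $0\to U\to \varphi_*(V)\to \varphi_*(V)/U\to 0$: the image of $\varphi^*(U)\to \varphi^*\varphi_*(V)\cong V$ is the $S$-submodule $US$ generated by $U$, and since $V$ is simple and $U\ne 0$ one gets $US=V$, whereupon semisimplicity of $R$ lets one split $\varphi_*(V)=U\oplus W$ and conclude $W\ten_R S=0$, i.e.\ $W=0$ by faithfulness of $\varphi^*$ on the essential image; thus $U=\varphi_*(V)$. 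Alternatively — and this is probably the cleanest route, paralleling the proof of Proposition \ref{surjsimples} — one invokes Lemma-style facts already used there: a proper $R$-submodule $U\subsetneq\varphi_*(V)$ would yield, after tensoring, $0\ne (\varphi_*(V)/U)\ten_R S$ mapping into $V$ compatibly, contradicting $\varphi^*\varphi_*(V)\cong V$; so no such $U$ exists and $\varphi_*(V)$ is simple.
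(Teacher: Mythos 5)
Your overall strategy is the paper's: reduce Lemma \ref{injsemisimple} to Proposition \ref{surjsimples} by checking that restriction along $\varphi$ sends simple $S$-modules to simple $R$-modules. The difference, and the problem, lies in how you verify that. Given $0\neq U\subseteq\varphi_*(V)$ with complement $W$ (using semisimplicity of $R$), you correctly deduce $W\otimes_R S=0$, but the justification of the final step ``$W\otimes_R S=0$, hence $W=0$, by faithfulness of $\varphi^*$ on the essential image'' does not apply: $W$ is only known to be an $R$-module direct summand of $\varphi_*(V)$, not an object of the essential image of $\varphi_*$, and for a general ring epimorphism the functor $-\otimes_R S$ is far from faithful on all of $\mathrm{Mod}\text{-}R$ (think of $\mathbb{Z}\to\mathbb{Q}$ and the module $\mathbb{Z}/2\mathbb{Z}$). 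As written, the argument is unjustified exactly at its crucial point.

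The gap is repairable in two ways, and both repairs show what is really being used. (i) The idempotent $R$-endomorphism of $\varphi_*(V)$ projecting onto $W$ is, by \emph{fullness} of $\varphi_*$, an $S$-endomorphism of $V$; hence $W$ is an $S$-submodule of the simple module $V$, and since $U\neq 0$ it must vanish. This is essentially the paper's proof in disguise: the paper simply observes that $\End_R(\varphi_*(V))=\End_S(V)$ is local (Schur), so $\varphi_*(V)$ is indecomposable over $R$, hence simple because $R$ is semisimple --- no tensoring needed. (ii) Alternatively, use the two hypotheses you have not exploited at that point, namely that $R$ is semisimple and $\varphi$ is injective: every nonzero $R$-module has a simple direct summand isomorphic to $eR$ for some nonzero idempotent $e$, and $eR\otimes_R S\cong\varphi(e)S\neq 0$ because $\varphi(e)\neq 0$; thus $-\otimes_R S$ kills no nonzero $R$-module here and $W=0$ follows. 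Either way the conclusion stands, but one of these arguments must be spelled out; the cleanest fix is to replace the tensor computation by the paper's one-line appeal to fullness of $\varphi_*$ and locality of $\End_S(V)$.
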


\begin{proof}
Let $A,B$ be two semisimple rings and $\psi:A\ra B$ an injective ring
epimorphism. Let $S$ be a simple $B$-module. Its endomorphism ring
$\End_B(S)$ is local. Since $\psi$ is a ring epimorphism, the restriction
functor $\psi_{\ast}: \text{Mod-}B \rightarrow \text{Mod-}A$ is fully faithful. Hence,
$S$ must be indecomposable as an $A$-module, and thus simple over $A$.
Now Proposition \ref{surjsimples} can be applied.
\end{proof}

\bigskip

The following known statement will imply properties of $\Ker(\varphi)$ in
Theorem \ref{surjepi}.

\begin{lem} \label{surjepitor}
 Let $\varphi:A\to B$ be a surjective ring epimorphism.
Then $\Tor^A_1(B,B)=0$ if and only if the kernel $\Ker(\varphi)$ is an
idempotent ideal of $A$.

In particular, if $A$ is right (or left) perfect, then every surjective homological
epimorphism  has a stratifying kernel.
\end{lem}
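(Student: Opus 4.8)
The plan is to prove \Cref{surjepitor} in two parts. The first part is the equivalence $\Tor^A_1(B,B)=0 \iff \Ker(\varphi)$ is idempotent; the second part deduces the ``stratifying kernel'' statement for surjective homological epimorphisms over perfect rings by combining the first part with the structure theory of idempotent ideals over perfect rings.

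\textbf{First part.} Write $I=\Ker(\varphi)$, so that there is a short exact sequence of $A$-bimodules $0\to I\to A\to B\to 0$. Applying $-\ten_A B$ and using $A\ten_A B\cong B$, the long exact sequence for $\Tor^A(-,B)$ gives
\[
\Tor^A_1(A,B)=0 \longrightarrow \Tor^A_1(B,B) \longrightarrow I\ten_A B \longrightarrow B\ten_A B \longrightarrow B\ten_A B \longrightarrow 0.
\]
Since $\varphi$ is a ring epimorphism, the multiplication map $B\ten_A B\to B$ is an isomorphism, and in particular the map $B\ten_A B\to B\ten_A B$ induced by $A\to B$ (which is just $B\ten_A B\xrightarrow{\sim} B \xleftarrow{\sim} B\ten_A B$) is an isomorphism. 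Hence the connecting map identifies $\Tor^A_1(B,B)$ with the kernel of $I\ten_A B\to B\ten_A B$, which is the image of the composite $I\ten_A B\to A\ten_A B\cong B$; and that image is precisely $(I\cdot A + \text{(nothing)})$ mapped into $B$, i.e.\ it is the image of $I$ in $B$, which is $0$. The cleaner way: the cokernel of $I\ten_A A\to A\ten_A A=A$ is $B$, so right-exactness of $-\ten_A B$ shows $\Coker(I\ten_A B\to B)\cong B\ten_A B\cong B$, forcing the map $I\ten_A B\to B$ to be zero; thus $\Tor^A_1(B,B)=\Ker(I\ten_A B\to B\ten_A B)$ fits into $0\to\Tor^A_1(B,B)\to I\ten_A B\to 0$, i.e.\ $\Tor^A_1(B,B)\cong I\ten_A B = I\ten_A (A/I) = I/I^2$. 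So $\Tor^A_1(B,B)=0$ if and only if $I=I^2$, which is the claim. (I should double-check the bimodule bookkeeping so that ``$I/I^2$'' is read as the right $A$-module $I\ten_A A/I$; this is routine.)

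\textbf{Second part.} Assume $\varphi:A\to B$ is a surjective homological epimorphism and $A$ is right (or left) perfect. By definition of homological epimorphism, $\Tor^A_i(B,B)=0$ for all $i\ge 1$; in particular $\Tor^A_1(B,B)=0$, so by the first part $I=\Ker(\varphi)$ is an idempotent ideal. It now suffices to show that over a right (or left) perfect ring every idempotent two-sided ideal $I$ is of the form $AeA$ for an idempotent $e\in A$; then $I=AeA$ is by definition a stratifying ideal (the homological hypothesis is already in force), and the induced recollement is a stratifying recollement as defined in \Cref{basics}. For this I would invoke the standard fact: if $A$ is semiperfect and $I$ is idempotent, then $I=AeA$ for some idempotent $e$ — one picks a complete set of primitive orthogonal idempotents $e_1,\dots,e_n$ adapted to the decomposition $A/I$ (using that $A/I$ is again semiperfect), lets $e$ be the sum of those $e_i$ lying ``inside'' $I$, and checks $AeA\subseteq I$ with $A/AeA$ and $A/I$ having the same simple modules, whence the idempotent ideal $I/AeA$ of the ring $A/AeA$ is idempotent and contained in the radical (as it is killed on the semisimple top), forcing $I=AeA$ by Nakayama — here right/left perfectness guarantees the radical is (left- or right-) $T$-nilpotent so that Nakayama's lemma applies to the possibly infinitely generated ideal $I/AeA$.

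\textbf{Main obstacle.} The homological-algebra computation in the first part is essentially formal once the ring-epimorphism identity $B\ten_A B\cong B$ is used correctly, so the real work is the last step of the second part: upgrading ``idempotent ideal'' to ``$AeA$'' when $I$ need not be finitely generated. The semiperfect case with finitely generated $I$ is classical, but to cover all idempotent ideals over a perfect ring one genuinely needs the $T$-nilpotency of the radical to run Nakayama's lemma in the non-finitely-generated setting; I would isolate this as the key lemma and cite it (it is the ``known statement'' the paper refers to) rather than reprove it in full, but I would spell out why perfectness — not merely semiperfectness — is the hypothesis that makes it go through.
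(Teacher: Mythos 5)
Your proposal is correct and follows essentially the same route as the paper, whose proof consists of two citations: Bergman--Dicks for the fact that $\Tor^A_1(B,B)\cong I/I^2$ for a surjective ring epimorphism with kernel $I$ (exactly the long-exact-sequence computation you carry out in full), and Michler's theorem that idempotent ideals of one-sided perfect rings are generated by idempotent elements (exactly the key lemma you isolate and propose to cite). One small caution: the ``standard fact'' as you first state it for semiperfect rings is false in general (a local ring can have a proper nonzero idempotent maximal ideal, and the primitive idempotents you sum need not lie in $I$ itself without a lifting argument), but your closing paragraph already corrects the hypothesis to perfectness and identifies $T$-nilpotency of the radical as the point that makes Nakayama work for the possibly non-finitely-generated ideal.
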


\begin{proof}
The first statement is well known, see e.g.~\cite{BD}.  For the second assertion we use 
 \cite[Proposition 2.1]{Michler}, where it is shown that idempotent ideals of (one-sided) perfect rings are generated by idempotent elements.
\end{proof}

{\bf Proof of Theorem \ref{surjepi}.}

First of all, $\varphi$ factors through its image $C$   as $\varphi=\tau\circ\psi$ where  $\tau:C\hookrightarrow B$  is an injective ring epimorphism, and $\psi:A\twoheadrightarrow C$
is a surjective ring homomorphism, hence an epimorphism, too.
Note that $C$ is again right perfect by \cite[Corollary 11.7.3]{Kasch}. So, we can assume without loss of generality that
 $\varphi$ is injective and
show that it is an isomorphism.

Since the quotient $B/\rad(B)$ is a product of skew-fields, it does not contain non-zero nilpotent elements.
Now  the radical $\rad(A)$ of the right perfect ring $A$ is left-t-nilpotent, thus its elements are nilpotent and so must be their images under $\varphi$. Hence they vanish in $B/\rad(B)$.

Therefore we may pass to the quotients $\bar{A}=A/\rad(A)$ and
$\bar{B}=B/\rad(B)$
and consider the ring homomorphism $\bar{\varphi}:\bar{A}\ra\bar{B}$ between semisimple rings.
It is also a ring epimorphism, for instance because $\Coker{\bar{\varphi}} \otimes_{\bar{A}} \bar{B}=0$. We claim that
$\bar{\varphi}$ is injective. Since $\bar{A}$ is semisimple, the
kernel $\Ker(\bar{\varphi})$ is a direct summand of $\bar{A}$. If it
is not zero, it must contain an idempotent $\bar{e}$. The radical
$\rad(A)$ is left-t-nilpotent, so by \cite[Theorem 11.5.3]{Kasch} there is a lifting $e\in A$ such that $e^2=e$ and
$e+\rad(A)= \bar{e}$. By the choice of $e$, the element $\varphi(e)$ is
an idempotent element belonging to $\rad(B)$, so it is zero, which  implies $e=0$ by the
injectivity of $\varphi$.
Hence also $\bar{e}=0$.
This proves the injectivity of $\bar{\varphi}$.

Now by Lemma \ref{injsemisimple}, $\bar{\varphi}$
is an isomorphism. In particular, the set of simple $B$-modules coincides with
the set of simple $A$-modules. Hence, by Proposition \ref{surjsimples},
$\varphi$ is surjective and thus an isomorphism.
\smallskip


To finish the proof, we just observe that the
last statement follows from Lemma \ref{surjepitor}. \qedsymbol

The proof of Theorem B works as well when relaxing the
  assumption $B$ to be basic by requiring instead an inclusion
  $\tau(\rad(C))\subseteq\rad(B)$.

\medskip

As an application, a positive answer to Question \ref{mainquestion} can be
given in a particular situation:

\begin{cor} \label{stratifyingrec}
  Let $A$ be right (or left) perfect and $B$ semilocal.
  Suppose there is a recollement of the derived module categories
\[
\xy (-45,0)*{\mathcal \Dcal(B)}; {\ar (-25,2)*{}; (-35,2)*{}};
{\ar (-35,0)*{}; (-25,0)*{}^{}}; {\ar(-25,-2)*{}; (-35,-2)*{}};
(-15,0)*{{\mathcal D}(A)}; {\ar (5,2)*{}; (-5,2)*{}_{}};
{\ar (-5,0)*{}; (5,0)*{}}; {\ar (5,-2)*{}; (-5,-2)*{}};
(15,0)*{{\mathcal \Dcal(C)}};
\endxy
\]
such that $i^*(A)$ is exceptional and basic.
Then the recollement is equivalent to a stratifying one.
\end{cor}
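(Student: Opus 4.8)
The plan is to realise $(R)$ as a recollement coming from a ring epimorphism, and then to apply Theorem \ref{surjepi} to that epimorphism. Since $i^{\ast}(A)$ is exceptional, Proposition \ref{akl} shows that $(R)$ is induced by a homological epimorphism $\varphi\colon A\to B'$ with $B'=\End_{\Dcal(B)}(i^{\ast}(A))$; concretely, there is a triangle equivalence $\Dcal(B)\xrightarrow{\ \sim\ }\Dcal(B')$ carrying $i_{\ast}$ to $\varphi_{\ast}$, so that $(R)$ becomes equivalent to the recollement of $\Dcal(A)$ determined by $\varphi$, whose left-hand functors are $i^{\ast}=-\lten_{A}B'$, $i_{\ast}=\varphi_{\ast}$ and $i^{!}=\rhom_{A}(B',-)$. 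By hypothesis $i^{\ast}(A)$ is basic, hence $B'=\End_{\Dcal(B)}(i^{\ast}(A))$ is basic in the sense of Theorem \ref{surjepi}.

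The essential point is to verify that $B'$ is semilocal, and this is where the hypothesis on $B$ enters. Since $i_{\ast}=i_{!}$ is a left adjoint (of $i^{!}$) it preserves coproducts, so its left adjoint $i^{\ast}$ preserves compact objects; as $A$ is compact in $\Dcal(A)$, the object $i^{\ast}(A)$ is compact in $\Dcal(B)$, i.e.\ isomorphic to a bounded complex of finitely generated projective $B$-modules. Hence $B'$ is the endomorphism ring, in the derived category, of a perfect complex over the semilocal ring $B$; such an endomorphism ring is again semilocal (equivalently: $B'$ is derived equivalent to $B$, and being semilocal is preserved under derived equivalence). I expect this to be the only step with genuine content; the remaining arguments are formal.

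Finally, $\varphi\colon A\to B'$ is a ring epimorphism with $A$ right (or left) perfect and $B'$ semilocal and basic, so Theorem \ref{surjepi} applies: $\varphi$ is surjective, and, being a homological epimorphism, it has a stratifying kernel (one may also invoke Lemma \ref{surjepitor}). Writing $\Ker\varphi=AeA$ with $e=e^{2}\in A$, we obtain $B'\cong A/AeA$, so the recollement of $\Dcal(A)$ determined by $\varphi$ is exactly the stratifying recollement attached to the stratifying ideal $AeA$, with right-hand term $\Dcal(eAe)$. Combined with the first paragraph, this exhibits $(R)$ as equivalent to a stratifying recollement; in the notation of Theorem \ref{maintheorem} one takes $A'=A$, $B'=A/AeA$, $C'=eAe$, together with the idempotent $e$.
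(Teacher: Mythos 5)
Your overall strategy coincides with the paper's: use Proposition \ref{akl} to realise the recollement as induced by the homological epimorphism $\varphi\colon A\to B'=\End_{\Dcal(B)}(i^*(A))$, check the hypotheses of Theorem \ref{surjepi}, and conclude (via Lemma \ref{surjepitor}) that $\varphi$ is surjective with stratifying kernel. The setup, the compactness argument for $i^*(A)$, and the final application of Theorem \ref{surjepi} are all fine. The problem is precisely at the step you yourself single out as ``the only step with genuine content'': you assert, but do not prove, that $B'$ is semilocal, and neither of the two justifications you offer is a fact one can simply quote.

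The claim that the derived endomorphism ring of a perfect complex over a semilocal ring is again semilocal is exactly what the paper has to establish here, and it is not formal. The paper writes $X=i^*(A)$ as a bounded complex with entries finitely generated projective $B$-modules $X_1,\dots,X_l$, observes that the cochain endomorphism ring $R$ of $X$ is a subring of the semilocal ring $R'=\End_B(X_1)\times\dots\times\End_B(X_l)$ such that the inclusion $R\subset R'$ carries non-units to non-units (the componentwise inverse of a cochain isomorphism is again a cochain map), invokes the Camps--Dicks theorem \cite[Corollary 2]{CampsDicks} to conclude that $R$ is semilocal, and then passes to the quotient $\bar R\cong\End_{\Dcal(B)}(X)$ modulo homotopies, using that quotients of semilocal rings are semilocal. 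Your alternative route --- $B'$ is derived equivalent to $B$ and semilocality is a derived invariant --- does not close the gap either: it is true that $i^*(A)$ is a tilting complex (it is compact, exceptional by hypothesis, and generates $\Dcal(B)$ since $i_*$ is fully faithful), but the derived invariance of semilocality is itself a nontrivial assertion whose natural proof is the very Camps--Dicks argument just described, so invoking it here is essentially circular. In short, the proposal is correct in outline and follows the paper's route, but the decisive technical point is missing.
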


Artinian rings, for instance, are perfect and semilocal.

\begin{proof}
By \ref{akl}, the recollement is induced by the homological
epimorphism \mbox{$\varphi:A\ra \End_{\cd(B)}(i^*(A))$.} In order to be able to apply
Theorem \ref{surjepi}, we have to show:

{\em Claim.} The ring $\End_{\cd(B)}(i^{\ast}(A))$ is  semilocal.

{\em Proof.} The complex $X:=i^{\ast}(A)$ is compact. Its entries are finitely
generated projective $B$-modules $X_1, \dots, X_l$ for some $l$.
The endomorphism ring of $X$ as a complex is the subring $R$ of
$R':=\End_B(X_1) \times \dots \times \End_B(X_l)$ formed by $l$-tuples
satisfying the commutativity condition in the definition of morphisms of
complexes. Factoring out homotopies, a quotient ring $\bar{R}$ of $R$ is obtained
that is isomorphic to $\End_{\cd(B)}(i^{\ast}(A))$.

It is well known (see for instance  \cite[Section 1.2]{Facchini}) that if a ring $S$ is semilocal, so are all full matrix rings over $S$, all rings of the form  $eSe$ for an idempotent element $e\in S$, and all quotient rings of $S$. Further, direct products of finitely many semilocal rings are
semilocal, too.

Now, since $B$ is semilocal, we infer that
 $\End_B(X_j$) is semilocal for all $j$, and the direct product
\mbox{$R'=\End_B(X_1) \times \dots \times \End_B(X_l)$} is so, too. The inclusion
$R \subset R'$ is a local homomorphism in the sense that it carries non-units
to non-units (or equivalently, $R$ is rationally closed in $R'$),
because the inverse of an $l$-tuple of isomorphisms satisfying the commutativity
conditions automatically satisfies the commutativity conditions as well.
Therefore, a result by Camps and Dicks \cite[Corollary 2]{CampsDicks}
implies that $R$ is semilocal, too. Then so is its quotient $\bar{R}$, and the claim is proven.

Now, the statement follows from Theorem \ref{surjepi}.
\end{proof}

\begin{rem}
  In the proof of Corollary \ref{stratifyingrec} we need to change the left and
  (in general also) the right hand terms of the recollement to get a
  stratifying one, while leaving the middle term $\Dcal(A)$ unchanged. The
  original and the modified recollement are in the same equivalence class of
  recollements of $\Dcal(A)$, according to the definition of equivalence of
  recollements in \cite[1.7]{AKL1}.
\end{rem}

\section{Constructing homological epimorphisms with stratifying
kernel}

If $\lambda$ fails to be a surjective homological epimorphism with a
stratifying kernel, one may try to replace $\lambda$ by a new homological
epimorphism with better properties.

{\em Can one change a homological epimorphism into a stratifying one in a way
compatible with a given recollement?}

A more precise formulation of this question is as follows:
Suppose $\lambda:A\ra B$ is a homological epimorphism.  Are there  rings
$A'$ and
$B'$ which are derived equivalent to $A$ and $B$, respectively, and a
stratifying
homological epimorphism $\lambda':A'\ra B'$
such that the following diagram commutes?
\[
\xymatrix{
D(\Mod B) \ar[d]^{\simeq} \ar[r]^{\lambda_*} & D(\Mod A)\ar[d]^{\simeq} \\
D(\Mod B') \ar[r]_{{\lambda'}_*} & D(\Mod A') }
\]

The results in Sections 2 and 3 suggest that
some restrictions need to be imposed on the setup.

For instance, one can use the following
connection with tilting theory from \cite{GL,AHS}: if $\lambda:A \ra B$ is an injective ring epimorphism such that $\Tor^1_A(B,B)=0$ and the  right $A$-module $B_A$ has projective dimension at most
one (which implies in particular that $\lambda$ is  homological), then the $A$-module $T:=B\oplus B/A$ is tilting.  Tilting modules arising in this way  are characterised by the existence of a  $T$-coresolution of $A$ of the form $0 \rightarrow A \rightarrow T_0
\rightarrow T_1 \rightarrow 0$ where $T_0, T_1 \in \Add(T)$ satisfy
$\Hom_A(T_1,T_0)=0$, see \cite[Theorem 3.10]{AHS}.

Notice that such $T$ will not be finitely generated in general. Assuming $B_A$ to be finitely presented, however,
 gives a setup
of interest in our context, since $T$ is then a classical tilting module and $A$ can be replaced by a derived equivalent ring $A'$.

This will be our first construction. The second construction will produce from
$\lambda$ a new ring homomorphism $\mu: A \rightarrow C$, which will
be a homological epimorphism under suitable assumptions.

\medskip

{\em First construction.}

We present a case where the  question above has a positive answer. In fact, it will be sufficient to
change the ring $A$, while keeping $B$ unchanged.

\begin{prop}
  Suppose $\lambda:A\ra B$ is a homological epimorphism. If $\lambda$ is
  injective and $B_A$ is finitely presented of projective dimension at most
  one, then there are a ring $A'$ which is derived equivalent to $A$ and a
  surjective homological
  epimorphism $\lambda':A' \ra B$, such that the two
  epimorphisms induce  equivalent  recollements.
\end{prop}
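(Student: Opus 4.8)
The plan is to take $A':=\End_A(T)$ for the tilting module $T:=B\oplus B/A$ produced by the construction recalled just before this proposition, to exhibit $A'$ as a triangular matrix ring, and to identify the wanted map $\lambda'$ with the evident projection of $A'$ onto its $B$-corner. The only real work is to check that the derived equivalence coming from $T$ transports the recollement induced by $\lambda$ to the one induced by this projection.

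In detail: since $\lambda$ is an injective ring epimorphism with $\Tor^A_1(B,B)=0$ and $\pd B_A\le 1$, the module $T=B\oplus B/A$ is tilting by \cite{GL,AHS}, and it is finitely generated because $B_A$ — hence also $B/A$ — is finitely presented. Thus $T$ is a compact exceptional generator of $\Dcal(A)$ (the exact sequence $0\to A\to B\to B/A\to 0$ places $A$ in $\thick(T)$), so $A'=\End_A(T)$ is derived equivalent to $A$ via an equivalence $G:=\rhom_A(T,-)\colon\Dcal(A)\xrightarrow{\ \sim\ }\Dcal(A')$ with $G(T)=A'$. Writing $T=T_0\oplus T_1$ with $T_0=B$ and $T_1=B/A$, one has $\End_A(T_0)=\End_A(B_A)=B$ (as $\lambda$ is a ring epimorphism) and $\Hom_A(T_1,T_0)=\Hom_A(B/A,B)=0$, the latter because $(B/A)\ten_A B=0$: tensoring $0\to A\xrightarrow{\lambda}B\to B/A\to 0$ over $A$ with $B$ makes the first map an isomorphism, $\lambda$ being a ring epimorphism. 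Hence $A'$ is the triangular matrix ring $\left(\begin{smallmatrix}B&0\\ M&E\end{smallmatrix}\right)$ with $M=\Hom_A(B,B/A)$ and $E=\End_A(B/A)$; if $e\in A'$ is the idempotent of the summand $T_1$, then $A'eA'=eA'=\left(\begin{smallmatrix}0&0\\ M&E\end{smallmatrix}\right)$, $eA'e=E$, and $\lambda'\colon A'\twoheadrightarrow A'/A'eA'=B$ is a surjective ring homomorphism whose kernel is $A'eA'$.

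It then suffices to prove that $G\circ\lambda_*\cong\lambda'_*$ as functors $\Dcal(B)\to\Dcal(A')$. Granting this, $\lambda'_*$ is a full embedding (the composite of the full embedding $\lambda_*$ with the equivalence $G$), so $\lambda'$ is a homological epimorphism — surjective and with stratifying kernel $A'eA'$ by construction — and the recollement it induces, being the unique one with $i_*=\lambda'_*$, coincides up to equivalence with the transport through $G$ of the recollement induced by $\lambda$; hence the two recollements are equivalent. To check $G\lambda_*\cong\lambda'_*$, fix $N\in\Dcal(B)$ and use $G(\lambda_*N)=\rhom_A(T,\lambda_*N)=\rhom_A(B,\lambda_*N)\oplus\rhom_A(B/A,\lambda_*N)$. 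The first summand is $i^!i_*(N)\cong N$ for the recollement of $\lambda$ (as $i_*=\lambda_*$ is fully faithful). For the second, applying $\rhom_A(-,\lambda_*N)$ to $0\to A\xrightarrow{\lambda}B\to B/A\to 0$ gives a triangle in which the map $\rhom_A(B,\lambda_*N)\to\rhom_A(A,\lambda_*N)=\lambda_*N$ induced by $\lambda$ is precomposition with $\lambda$, i.e. the counit $i_*i^!(\lambda_*N)\to\lambda_*N$, which is an isomorphism since $\lambda_*N$ lies in the image of $i_*$; hence $\rhom_A(B/A,\lambda_*N)=0$. Thus $G(\lambda_*N)\cong N$, and reading the $\End_A(T)$-action off the matrix description shows that $e$ acts as $0$ and the complementary idempotent as the identity, so this $A'$-module is precisely $\lambda'_*(N)$; naturality in $N$ is clear.

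The one genuinely delicate step is this last identification: keeping track of the $A'=\End_A(T)$-module structure on $G(\lambda_*N)$ and recognising the comparison map of the triangle as the (invertible) counit. Everything else is formal; in particular no hypothesis on the ground ring and no passage to dg rings is needed, since $\End_A(T)$ is an ordinary ring, so the difficulties encountered in the proof of Theorem \ref{maintheorem} do not arise here.
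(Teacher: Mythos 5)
Your proposal is correct and follows essentially the same route as the paper: tilt at $T=B\oplus B/A$, realise $A'=\End_A(T)$ as a triangular matrix ring using $\Hom_A(B/A,B)=0$, and take $\lambda'$ to be the projection onto the $B$-corner, whose kernel is the stratifying ideal generated by the idempotent of the summand $B/A$. The only difference is one of detail: where the paper cites \cite{AHS,GL} for $\Hom_A(B/A,B)=0$ and \cite{Koenig91} for the compatibility of the recollements, you verify these directly (via tensor--hom adjunction and the computation $\rhom_A(B/A,\lambda_*N)=0$), which is a welcome but not essentially different argument.
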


\begin{proof}
  Under the assumptions made, $T:=B\oplus B/A$ is a tilting $A$-module and
  \mbox{$\Hom_A(B/A,B)=0$,} by \cite[Theorem 3.5]{AHS} and
  \cite[Proposition 4.12]{GL}. The homological epimorphism $\lambda$ induces a recollement
\[
\xy (-45,0)*{\mathcal \Dcal(B)}; {\ar (-25,2)*{}; (-35,2)*{}};
{\ar (-35,0)*{}; (-25,0)*{}^{}}; {\ar(-25,-2)*{}; (-35,-2)*{}};
(-15,0)*{{\mathcal D}(A)}; {\ar (5,2)*{}; (-5,2)*{}_{}};
{\ar (-5,0)*{}; (5,0)*{}}; {\ar (5,-2)*{}; (-5,-2)*{}};
(15,0)*{{\mathcal \Dcal(C)}};
\endxy
\]
where $C:=\End_A(B/A)$ (see \cite[Example 3.1]{AKL1} and  \cite[Theorem B]{MV}).
Moreover, $$A':=\End_A(T)=\begin{pmatrix} B=\End_A(B) & \Hom_A(B,B/A)\\
0 & C=\End_A(B/A) \end{pmatrix}$$ is derived equivalent to $A$.
This is a well studied situation,
see for instance \cite[Cor. 12 and 15]{Koenig91}.
The $T$-resolution of $A$
is $0\ra A\xrightarrow{\lambda} B\ra B/A\ra 0$.
Let $e\in A'$ be the idempotent corresponding to $B/A$.
Since $\Hom_A(B/A,B)=0$, $eA'(1-e) = 0$. Hence $eA'=eA'e=C$, $A'e=A'eA'$ and
$A'e\lten_{eA'e}eA'=A'eA'$, that is, the ideal $A'eA'$ generated by $e$ is stratifying. By construction, $A'/A'eA'
=\End_A(B)= B$. Hence the stratifying ideal $A'eA'$ induces a recollement
\[
\xy (-45,0)*{\mathcal \Dcal(B)}; {\ar (-25,2)*{}; (-35,2)*{}};
{\ar (-35,0)*{}; (-25,0)*{}^{}}; {\ar(-25,-2)*{}; (-35,-2)*{}};
(-15,0)*{{\mathcal D}(A')}; {\ar (5,2)*{}; (-5,2)*{}_{}};
{\ar (-5,0)*{}; (5,0)*{}}; {\ar (5,-2)*{}; (-5,-2)*{}};
(15,0)*{{\mathcal \Dcal(C)},};
\endxy
\]
which is equivalent to the original one. In particular, there is the desired commutative diagram of derived categories, involving the derived equivalence between $A$ and $A'$.
\end{proof}

The following example illustrates how the injective homological epimorphism $\lambda$ gets enlarged to
obtain a surjective homological epimorphism $\lambda'$ which is `derived equivalent' to $\lambda$ in the above sense.

\begin{ex}
Let $A$ be
the path algebra of the quiver $A_2$ over a field $k$; in other words, $A$ is the algebra of $2\times 2$ upper triangular matrices over $k$. Let
$B$ be the algebra of all $2\times 2$ matrices over $k$.
The inclusion $$\lambda:A=\begin{pmatrix} k & k \\ 0 & k \end{pmatrix} \hookrightarrow
\begin{pmatrix} k & k \\ k & k \end{pmatrix}=B$$ is a homological epimorphism such that the simple $B$-module
get identified with the projective-injective $A$-module $P$. So $B$ as a right $A$-module is isomorphic to $P\oplus P$.
 By \cite[Theorem 5.1]{AKL3}, $\lambda$ induces a recollement of $\Dcal(A)$ in terms of $\Dcal(B)$ and $\Dcal(C)$ for some  $k$-algebra $C$.
  Moreover, the image of $j_!$ (respectively, $j_*$) is generated by the
  simple injective (respectively, the simple projective) $A$-module, which is
  left (respectively, right) perpendicular to $P$.
This is an easy example of a recollement not of `stratifying type'. In fact, $A$ has two non-trivial stratifying ideals, generated by the two primitive idempotents, and the resulting recollements are different from the current one, as can be checked directly on objects.

The tilting module
$T:=B\oplus B/A$ is the direct sum $P\oplus P\oplus S$. Hence
$$A'=\End_A(T)=\begin{pmatrix} k& k & k\\ k & k & k \\ 0 & 0& k \end{pmatrix}$$
which is an enlarged version of $A$; $A$ and $A'$ are Morita equivalent, but $T$ is not a progenerator. The new homological epimorphism $\lambda':A'\ra B$
is surjective with a stratifying kernel.

This example also shows that modifying $B$ while keeping $A$ does in general not allow for a solution of the modification problem.
\end{ex}

{\em Second construction.}

The following general  construction of a ring homomorphism, whose kernel and cokernel can be controlled, will be  used  to produce homological epimorphisms with stratifying kernel.

We start with a ring homomorphism $f: A \rightarrow B$ with cone $K_f$  in $\Dcal(A)$, so that
there is a triangle in $\cd(A)$
$$(\dagger) \phantom{xx} A \stackrel{f}{\rightarrow} B \rightarrow K_f
\rightarrow A[1].$$

Denote by $C$ the endomorphism ring of $K_f$ in $\cd(A)$. Then a ring homomorphism
$\mu: A \rightarrow C$ can be defined as follows: any element $a \in A$ defines a module homomorphism $A \rightarrow A$ and its $f$-image $f(a)$ defines a module
homomorphism $B \rightarrow B$ according to
 the diagram
$$\begin{array}{ccc}
A & \stackrel{f}{\longrightarrow} & \hspace{-0.5cm} B \\
\phantom{x} \downarrow{a} & & \hspace{-0.5cm} \phantom{xxx} \downarrow{f(a)} \\
A & \stackrel{f}{\longrightarrow} & \hspace{-0.5cm} B
\end{array}$$
which is commutative, since $1_A$ gets sent to $f(a)$ in both ways. Therefore, the pair $(a,f(a))$ is an endomorphism of the complex $A \rightarrow B$ and induces an endomorphism of $K_f$ in $\cd(A)$. In this way, we obtain a ring homomorphism  $\mu:A\to C$.

\begin{thm} \label{secondconstruction}
Let $f:A\to B$ be a ring epimorphism whose cone $K_f$ satisfies $\Ext^{-1}_{\cd(A)}(K_f,K_f) := \Hom_{\cd(A)}(K_f,K_f[-1])= 0$. Assume that $\Tor^A_1(B,B)=0$. Then the ring homomorphism $\mu:A\to C$ defined above has kernel  $\Hom_A(B,A)$ and cokernel  $\Ext^1_A(B,A)$.
\end{thm}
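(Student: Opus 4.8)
The plan is to compute the kernel and cokernel of $\mu$ by identifying $C = \End_{\cd(A)}(K_f)$ more explicitly using the triangle $(\dagger)$, and then tracking where elements of $A$ land under $\mu$. First I would apply the cohomological functors $\Hom_{\cd(A)}(-, A)$ and $\Hom_{\cd(A)}(B, -)$, $\Hom_{\cd(A)}(K_f, -)$ to the triangle $A \xrightarrow{f} B \to K_f \to A[1]$. Since $f$ is a ring epimorphism, the restriction functor $f_*: \Dcal(B) \to \Dcal(A)$ is fully faithful, and moreover $\Tor^A_1(B,B) = 0$; together these say $f$ is a homological epimorphism, so $B \lten_A B \cong B$. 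I expect to use this to show that $\Hom_{\cd(A)}(B, K_f) = 0$ and $\Hom_{\cd(A)}(K_f, B[n]) = 0$ for the relevant small $n$, which is what lets $C$ be controlled.

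Next, from the long exact sequence obtained by applying $\Hom_{\cd(A)}(-, K_f)$ to $(\dagger)$, together with the vanishing $\Hom_{\cd(A)}(B, K_f) = 0$ and the hypothesis $\Ext^{-1}_{\cd(A)}(K_f, K_f) = 0$, I would get an exact sequence relating $C = \Hom(K_f, K_f)$, $\Hom(A, K_f) = H^0(K_f)$, and the connecting terms. Similarly, applying $\Hom_{\cd(A)}(B,-)$ and $\Hom_{\cd(A)}(A,-)$ to $(\dagger)$ computes the cohomology of $K_f$: since $f_*$ is fully faithful one gets $H^0(K_f) \cong \Coker(f)$ fitting into $0 \to \Hom_A(B,A) \to A \xrightarrow{f} B \to \ldots$, but the precise statement I want is about how $\mu$ sits inside these. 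The cleanest route is: $\mu$ is the composite $A = \End_{\cd(A)}(A) \to \End_{\cd(A)}(\text{the complex } A \to B) \to \End_{\cd(A)}(K_f) = C$, where the first map sends $a \mapsto (a, f(a))$. Using the triangle, $\End_{\cd(A)}(K_f)$ differs from $\Hom_{\cd(A)}(K_f, A[1])$-type terms, and the kernel of $A \to C$ consists of those $a$ for which $(a, f(a))$, viewed via the octahedron/rotation, factors through $B[ -1] \to A$ or vanishes on $K_f$; this kernel should be exactly the image of $\Hom_{\cd(A)}(B, A) \to \Hom_{\cd(A)}(A,A) = A$ induced by $f$, i.e. $\{g \circ f^{-1}\}$ — concretely $\Hom_A(B, A)$ (the maps $B \to A$ precomposed with $f$), and the cokernel is measured by $\Hom_{\cd(A)}(A, A[1]) $-obstruction, landing in $\Ext^1_A(B,A)$.

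More carefully, I would argue as follows. Apply $\Hom_{\cd(A)}(-, A)$ to $(\dagger)$ to get the exact sequence $\Hom(A[1], A) \to \Hom(K_f, A) \to \Hom(B, A) \xrightarrow{f^*} \Hom(A,A) \to \Hom(K_f[-1], A) \to \ldots$, i.e. $0 \to \Hom_A(K_f, A) \to \Hom_A(B,A) \xrightarrow{\circ f} A \to \Ext^1_A(K_f, A) \to \ldots$; and note $\Hom_A(K_f[-1], A) = \Ext^1_{\cd(A)}(K_f, A)$, which by applying $\Hom(-,A)$ once more and using that $f$ is a homological epimorphism identifies with $\Ext^1_A(B,A)$ up to the relevant boundary terms (here $\Hom_A(B,A) \to A$ has cokernel computing $\Ext^1_A(B,A)$ precisely because $H^0(K_f) = \Coker f$ and $H^1(K_f)$ contributes $\Ext^1_A(B,A)$). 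The point is then to recognize that $\mu: A \to C$ factors through this sequence: the map $A \to C$ is, up to the identification of $C$ with a sub/quotient encoded by $(\dagger)$, the same as the boundary-type map $A \cong \End(A) \to \Ext^1(K_f, A)$-free part $\to \End(K_f)$, so that $\ker\mu = \mathrm{im}(\Hom_A(B,A) \to A) \cong \Hom_A(B,A)$ (the map being injective since $\Hom_A(K_f, A)$ sits inside and one checks it is the kernel of $\Hom_A(B,A) \to A$, which vanishes — here I would use $\Hom_{\cd(A)}(B, B) \cong B$ to see $\Hom_A(K_f, A) \hookrightarrow \Hom_A(K_f, B) = 0$ after the relevant reductions), and $\mathrm{coker}\,\mu \cong \Ext^1_A(B,A)$.

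\emph{Main obstacle.} The delicate point is not the homological algebra of the long exact sequences — those are routine once $(\dagger)$ is in hand — but rather making precise the identification of the ring homomorphism $\mu$ with the connecting-map picture: i.e., verifying that the explicit two-term-complex description of $\mu$ (sending $a$ to the endomorphism induced by $(a, f(a))$) agrees, compatibly with the ring structure, with the map $A \to C$ one reads off from the triangle. In particular one must check that $\mu(a) = 0$ in $C$ precisely when the endomorphism $(a, f(a))$ of the complex $[A \to B]$ is null-homotopic in a way detected by $K_f$, which translates into $a$ lying in the image of $\Hom_A(B,A) \xrightarrow{\circ f} A$; and one must check surjectivity onto the relevant part of $C$ so that the remaining quotient is exactly $\Ext^1_A(B,A)$ and nothing smaller. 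I would handle this by writing $K_f$ as the mapping cone $\mathrm{Cone}(f)$ with its explicit two-term model and computing $\End$ in the homotopy category of complexes of $A$-modules, using $\Tor^A_1(B,B) = 0$ and the ring-epimorphism property $B \otimes_A B \cong B$ to kill the cross terms, exactly as the unit $\mathbf{1}_A \mapsto f(a)$ remark in the statement's construction already hints.
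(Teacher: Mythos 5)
Your proposal is correct in substance and follows essentially the same route as the paper: the three long exact sequences obtained from the triangle $A\to B\to K_f\to A[1]$, the vanishing of $\Hom_{\cd(A)}(K_f,B)$ and $\Hom_{\cd(A)}(K_f,B[1])$ deduced from the ring-epimorphism and $\Tor$ hypotheses (via $\Ext^1_A(B,B)\simeq\Ext^1_B(B,B)=0$), the resulting identification $\beta\colon C\xrightarrow{\sim}\Hom_{\cd(A)}(K_f,A[1])$, and the verification on the explicit cone model that $\mu$ corresponds to the connecting homomorphism $A=\Hom_{\cd(A)}(A,A)\to\Hom_{\cd(A)}(K_f,A[1])$, whose kernel and cokernel are $\Hom_A(B,A)$ and $\Ext^1_A(B,A)$. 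One caveat: your auxiliary claim that $\Hom_{\cd(A)}(B,K_f)=0$ is false in general (this group surjects onto $\Ext^1_A(B,A)$, which is typically nonzero), but it is not needed anywhere --- the vanishings actually required are exactly the ones on the other side, $\Hom_{\cd(A)}(K_f,B[n])=0$ for $n=0,1$, which you also state and which suffice.
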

\begin{proof}
Applying various $\Hom$-functors to the triangle $(\dagger)$ yields long exact
sequences, where we write $(X,Y)=\Hom_{\cd(A)}(X,Y)$ for short:
\begin{align*}
&(1) \hspace{0.5cm}
0=(A,A[-1]) \rightarrow (K_f,A) \rightarrow (B,A) \rightarrow (A,A) \xrightarrow{conn}
(K_f,A[1]) \rightarrow (B,A[1]) \rightarrow (A,A[1])=0,\\
&(2) \hspace{0.5cm}
0=(A,B[-1]) \rightarrow (K_f,B) \rightarrow (B,B) \stackrel{\alpha}{\rightarrow}
(A,B) \rightarrow (K_f,B[1]) \rightarrow (B,B[1]) = \Ext^1_A(B,B),\\
&(3) \hspace{0.5cm}
0=(K_f,K_f[-1]) \rightarrow (K_f,A) \rightarrow (K_f,B) \rightarrow (K_f,K_f)=C
\xrightarrow{\beta} (K_f,A[1]) \rightarrow (K_f,B[1]).\end{align*}

In $(1)$ and in $(2)$, the starting terms vanish, since modules don't have extensions in negative degrees. The starting term in $(3)$ vanishes by assumption on $K_f$.

In $(2)$,
the assumption $\Tor^A_1(B,B)=0$  implies $\Ext^1_A(B,B) \simeq \Ext^1_B(B,B)=0$ by \cite[Theorem 4.8]{Sch}. Since $f$ is a ring epimorphism, $\Hom_A(B,B) = \Hom_B(B,B) \simeq B \simeq \Hom_A(A,B)$, hence the map $\alpha$ is an isomorphism  and $(K_f,B) = 0 = (K_f,B[1])$.

Plugging this into $(3)$ gives $(K_f,A) = 0$ and $\beta: C \xrightarrow{\sim} (K_f,A[1])$.

Now the sequence $(1)$ reduces to
\[
0 \rightarrow (B,A) \rightarrow
(A,A) \stackrel{conn}{\rightarrow} (K_f,A[1]) \rightarrow (B,A[1])
\rightarrow 0.
\]
For any $a\in A$, the following commutative diagram
\[
\xymatrix{
A\ar[r]^{f} \ar[d]_{a} & B \ar[r] \ar[d]_{f(a)} & K_f \ar[r]^{\pi} \ar[d]_{\mu(a)} & A[1] \ar[d]_{a[1]} \\
A \ar[r]^{f} & B \ar[r] & K_f \ar[r]^{\pi} & A[1]
}
\]
shows that $conn(a) = a[1]\circ \pi=\pi\circ\mu(a)=(\beta\circ\mu)(a)$. Namely, under the identifications
$(A,A) \simeq A$ and $\beta: C\xrightarrow{\sim} (K_f,A[1])$, the connecting homomorphism $conn$
gets identified with $\mu$. This finishes the proof.
\end{proof}

In the special case of injective ring epimorphisms, $K_f = B/A$ is a module and thus it has no negative self-extensions. So Theorem \ref{secondconstruction} has the following consequence, generalising a construction from \cite[p.~295]{GL}.

\begin{cor} \label{injepimorphism}
 Let  $\lambda: A \ra B$ be an injective ring epimorphism such that
$\Tor_1^A(B,B)=0$. Let $C:=\End_A(B/A)$ be the endomorphism ring of
$B/A$ as right $A$-module. Then the  left $A$-module structure on $B/A$ induces a ring homomorphism $\mu:A\ra
C$ such that $\Ker(\mu)\simeq\Hom_A(B,A)$ and
$\Coker(\mu)\simeq\Ext^1_A(B,A)$.
\end{cor}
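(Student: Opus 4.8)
The plan is to obtain Corollary \ref{injepimorphism} as an immediate application of Theorem \ref{secondconstruction}, once it is checked that an injective ring epimorphism satisfies the hypotheses of that theorem and that the two descriptions of the map $\mu$ agree.

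First I would observe that, because $\lambda$ is injective, the short exact sequence $0\to A\xrightarrow{\lambda}B\to B/A\to 0$ of right $A$-modules identifies the cone $K_\lambda$ of $\lambda$ in $\cd(A)$ with the stalk complex $B/A$ concentrated in degree $0$. Consequently $\End_{\cd(A)}(K_\lambda)\cong\End_A(B/A)=C$, so the endomorphism ring in Theorem \ref{secondconstruction} is exactly the ring $C$ of the corollary. Next I would verify the negative-$\Ext$ hypothesis: since $K_\lambda$ is a stalk complex in degree $0$, the group $\Hom_{\cd(A)}(K_\lambda,K_\lambda[-1])$ equals $\Ext^{-1}_A(B/A,B/A)$, which vanishes as modules have no extensions in negative degrees. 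The other hypothesis, $\Tor^A_1(B,B)=0$, is assumed. Hence Theorem \ref{secondconstruction} applies and produces a ring homomorphism $A\to C$ with kernel $\Hom_A(B,A)$ and cokernel $\Ext^1_A(B,A)$.

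Finally I would check that the homomorphism produced by the construction preceding Theorem \ref{secondconstruction} is the one induced by the left $A$-module structure on $B/A$: by construction $\mu(a)$ is the endomorphism of $K_\lambda\cong B/A$ arising from the commutative square given by left multiplication by $a$ on $A$ and by $\lambda(a)$ on $B$, and passing to cokernels this is precisely left multiplication on $B/A$ by the image of $a$. Thus $\mu$ is the canonical ring map $A\to\End_A(B/A)$ attached to the left module structure, and the isomorphisms $\Ker(\mu)\simeq\Hom_A(B,A)$ and $\Coker(\mu)\simeq\Ext^1_A(B,A)$ are the ones supplied by the theorem. There is no genuine obstacle here; injectivity of $\lambda$ is used only to guarantee $K_\lambda=B/A$, and the sole point requiring care is the routine identification of the abstractly defined $\mu$ with the left-multiplication map, all the substance being contained in Theorem \ref{secondconstruction}.
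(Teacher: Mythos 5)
Your proposal is correct and follows exactly the paper's own route: the paper deduces the corollary from Theorem \ref{secondconstruction} by the single observation that injectivity of $\lambda$ makes $K_\lambda=B/A$ a stalk module, hence without negative self-extensions. Your additional check that the abstractly defined $\mu$ agrees with the left-multiplication map on $B/A$ is a detail the paper leaves implicit, but it is the same argument.
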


 Under additional assumptions, this leads to homological epimorphisms with stratifying kernel:

\begin{cor}\label{artinalg} Let $A,B$ be artin algebras,  and let  $\lambda: A \ra B$ be an injective ring epimorphism such that $B_A$ has projective dimension at most one and
$\Tor_1^A(B,B)=0$.
Then
$\mu:A\ra C$ is a homological epimorphism, and the projective dimension of $_AC$ as left $A$-module is at most one. Moreover, if also the projective dimension of $_A\Ext^1_A(B,A)$ as left $A$-module is at most one, then $\mu$ has a   stratifying kernel.
\end{cor}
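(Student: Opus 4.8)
The plan is to combine Corollary \ref{injepimorphism} with standard homological dimension shifting, using the tilting-theoretic setup from \cite{AHS,GL} that has already appeared in the First Construction. First I would invoke Corollary \ref{injepimorphism} directly: since $\lambda$ is an injective ring epimorphism with $\Tor_1^A(B,B)=0$, we obtain the ring homomorphism $\mu:A\to C$ with $\Ker(\mu)\simeq\Hom_A(B,A)$ and $\Coker(\mu)\simeq\Ext^1_A(B,A)$, where $C=\End_A(B/A)$. The hypothesis $\projdim B_A\le 1$ means that $B/A$ sits in a short exact sequence $0\to A\to B\to B/A\to 0$ with $B$ and $A$ (hence $B/A$) of projective dimension at most one; by the argument already used above, $T:=B\oplus B/A$ is then a (classical, since $B_A$ is finitely presented over the artin algebra $A$) tilting module, and $A':=\End_A(T)$ is derived equivalent to $A$ with the block form displayed earlier, in which $C$ appears as $eA'e$ for the idempotent $e$ corresponding to the summand $B/A$.

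Next I would verify that $\mu$ is a homological epimorphism. Because $\mu$ is, by Corollary \ref{injepimorphism}, the connecting map realising $C$ as (a piece of) the endomorphism ring appearing in the recollement attached to $T$, it suffices to identify $\mu$ with the canonical ring epimorphism in that recollement. Concretely, using $\projdim B_A\le 1$ one shows $C=\End_A(B/A)$ is the ring such that the derived tensor/Hom functors along $\mu$ are the recollement functors $j_!,j^!,j_*$; that $\mu$ is a ring epimorphism follows from $B/A$ being exceptional in $\cd(A)$ (it is a module of projective dimension $\le 1$, so $\Ext^{\ge 2}_A(B/A,B/A)=0$, and $\Ext^1_A(B/A,B/A)$ vanishes by the tilting condition $\Hom_A(B/A,B)=0$ together with the long exact sequence obtained from $0\to A\to B\to B/A\to 0$). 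That $\mu$ is moreover \emph{homological} is then equivalent to $\Tor$-vanishing, which one reads off from the triangle $(\dagger)$ with $K_f=B/A$ exactly as in the proof of Theorem \ref{secondconstruction}. For the bound $\projdim{}_AC\le 1$: apply $\Hom_A(-,{}_AA)$, or rather work on the left, to the sequence $0\to A\to B\to B/A\to 0$ and use $\Ker(\mu)=\Hom_A(B,A)$, $\Coker(\mu)=\Ext^1_A(B,A)$ to place $C$ in a four-term exact sequence $0\to\Hom_A(B,A)\to C\xrightarrow{} ?\to\Ext^1_A(B,A)\to 0$ relating it to $\Hom_A(B/A,A)$ and $A$; since $\projdim B_A\le1$ forces the relevant $\Ext$-groups into a single degree, a dimension-shift gives $\projdim{}_AC\le 1$.

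Finally, for the stratifying-kernel claim: I would show that under the extra hypothesis $\projdim{}_A\Ext^1_A(B,A)\le 1$, the kernel of $\mu$, namely $\Hom_A(B,A)$, is generated by an idempotent. By Lemma \ref{surjepitor} (via \cite[Proposition 2.1]{Michler}, since artin algebras are perfect), it is enough to prove $\mu$ is \emph{surjective} and its kernel idempotent; surjectivity is equivalent to $\Coker(\mu)=\Ext^1_A(B,A)=0$, which in general will not hold, so instead one must argue that the relevant ideal is idempotent directly — i.e. that $\Tor_1^A(C',C')=0$ where $C'=\mathrm{im}(\mu)$, again using Lemma \ref{surjepitor}. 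The extra projective-dimension hypothesis on $\Ext^1_A(B,A)$ is precisely what is needed to push the cokernel computation through: it guarantees that in the derived-category identification of $\mu$ with a quotient map $A'\to A'/A'eA'$ after the derived equivalence (the mechanism of Theorem \ref{maintheorem} and the First Construction), no higher obstructions appear, so $\mathrm{im}(\mu)$ has idempotent kernel and hence $\mu$ factors as a surjective homological epimorphism with stratifying kernel followed by an isomorphism. The main obstacle I anticipate is this last point — cleanly controlling $\Coker(\mu)$ and showing that the ideal $\Ker(\mu)\lhd A$ (not a priori arising from an idempotent) becomes stratifying after passing to the derived-equivalent $A'$; everything before that is bookkeeping with the tilting module $T=B\oplus B/A$ and dimension shifting along $0\to A\to B\to B/A\to 0$.
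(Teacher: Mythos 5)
Your final step has the same skeleton as the paper's argument: pass from $\mu$ to the surjection $\nu\colon A\to A/I$ with $I=\Ker(\mu)$, show that $\nu$ is a surjective homological epimorphism, and conclude via Lemma \ref{surjepitor} (and Michler's theorem, since artin algebras are perfect) that $I$ is generated by an idempotent. However, there are two genuine gaps. First, the opening claims --- that $\mu$ is a homological epimorphism and that ${}_AC$ has projective dimension at most one --- are not actually established. Your dimension shift starts from a garbled sequence; the correct one is
\[
0\to\Hom_A(B,A)\to A\xrightarrow{\ \mu\ } C\to\Ext^1_A(B,A)\to 0,
\]
with $A$, not $C$, in second position, and even in corrected form it cannot bound $\projdim{}_AC$ without control of ${}_A\Ext^1_A(B,A)$, which is only hypothesised in the ``moreover'' part. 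The paper simply quotes \cite[Proposition 4.13]{GL}, which proves both claims in the perpendicular-category setting; likewise your assertion that exceptionality of $B/A$ makes $\mu$ a ring epimorphism is not an argument about the map $\mu$ at all.

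Second, and more seriously, the decisive computation $\Tor_i^A(A/I,A/I)=0$ for all $i\ge 1$ is never performed: you flag it yourself as ``the main obstacle'' and substitute an appeal to a derived-category identification of $\mu$ with a quotient map $A'\to A'/A'eA'$. That is circular --- such an identification is essentially what is to be proved --- and in any case the stratifying ideal must be exhibited inside $A$ itself, not inside the derived-equivalent algebra $A'$. The paper closes this gap concretely: it runs the first part of the proof of \cite[Lemma 4.5]{BS}, using that the left $A$-modules $C$ and $\Coker\mu\simeq\Ext^1_A(B,A)$ both have projective dimension at most one (the former from \cite{GL}, the latter being exactly the extra hypothesis). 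This is the point where the hypothesis on ${}_A\Ext^1_A(B,A)$ actually does its work, and your proposal never makes that connection explicit.
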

\begin{proof}
The first statement follows from \cite[Proposition 4.13]{GL}. For the second statement, we set $I=\Ker(\mu)$ and consider the surjective ring epimorphism $\nu:A\to A/I$. The condition  $\Tor_i^A(A/I,A/I)=0$ for $i=1$ is verified  as in the first part of the proof of \cite[Lemma 4.5]{BS}, using that the left $A$-modules $C$ and $\Coker\mu$ have projective dimension at most one, and similarly one checks the cases $i\ge 2$.  So $\nu$ is a surjective homological epimorphism, and the claim follows from  Lemma~\ref{surjepitor} since  $A$ is perfect.
\end{proof}

\begin{ex}
  Let   $\lambda: A \ra B$   be an injective homological epimorphism of
  hereditary artin algebras. From the Corollaries above we deduce that the homological epimorphism $\mu:A\ra C$ is
\begin{enumerate}
\item[(i)] surjective with a stratifying kernel if and only if $B_A$ is projective,
\item[(ii)] injective  if and only if $B_A$ has no projective direct summand.
\end{enumerate}
For case (i) see also \cite[Theorem B]{MV}.

More concretely, let $A$ be the Kronecker algebra over a field $k$, and let $P_i$ be the indecomposable preprojective module of dimension vector $(i,i+1)$ for $i=1,2,3$. Consider the tilting module $T=P_1\oplus P_2$. The minimal $T$-coresolution of $A$ is given by $0\to A\to P_1\,^3\to P_2\to 0$, and $T$ arises from the
injective homological epimorphism $\lambda:A\to B=\End_A(P_1\,^3)$ as explained at the beginning of this Section. In this case $B_A$ is projective, $C=\End_A(P_2)\simeq k$, and $\mu:A\to C$ is the stratifying epimorphism induced by the idempotent element $e$ of $A$ corresponding to the projective module $P_1$.

Let us now consider  the tilting module $T'=P_2\oplus P_3$. The minimal
$T'$-coresolution of $A$ is given by $0\to A\to P_2\,^5\to P_3\,^3\to 0$,
and $T'$ arises
from the injective homological epimorphism $\lambda':A\to B'=\End_A(P_2\,^5)$.
Here $B'_A$ has no projective summand, $C=\End_A(P_3\,^3)\simeq M_3(k)$, and
$\mu:A\to C$ is injective.
\end{ex}


Finally we remark that in the situation of Corollary \ref{artinalg} there is a ladder of height two as follows
\[
\xy (-45,0)*{\mathcal \Dcal(B)}; {\ar (-25,2)*{}; (-35,2)*{}};
{\ar (-35,0)*{}; (-25,0)*{}^{}}; {\ar(-25,-2)*{}; (-35,-2)*{}}; {\ar(-35,-4)*{}; (-25,-4)*{}};
(-15,0)*{{\mathcal D}(A)}; {\ar (5,2)*{}; (-5,2)*{}_{}};
{\ar (-5,0)*{}; (5,0)*{}}; {\ar (5,-2)*{}; (-5,-2)*{}}; {\ar (-5,-4)*{}; (5,-4)*{}};
(15,0)*{{\mathcal \Dcal(C)}};
\endxy
\]
where the `upper' and `lower' recollements are induced by the homological
epimorphisms \mbox{$\lambda: A \ra B$} and
$\mu: A \ra C$ respectively (for the terminology of ladder see \cite{AKLY2}). This shows that in some cases our second construction in Theorem C changes a homological epimorphism into a stratifying one, such that the induced recollements are not equivalent but lie on the same ladder. More precisely, the original recollement can be reflected one step downward to get the new one.

\end{document}